\documentclass[11pt]{article}
\usepackage[margin=1in]{geometry}
\usepackage{amsmath,amssymb,amsthm,mathtools}
\usepackage{microtype}
\usepackage{enumitem}
\usepackage{hyperref}
\usepackage[nameinlink,capitalise]{cleveref}
\hypersetup{hidelinks}
\numberwithin{equation}{section}
\usepackage{dirtytalk}

\newtheorem{theorem}{Theorem}[section]
\newtheorem{lemma}[theorem]{Lemma}
\newtheorem{proposition}[theorem]{Proposition}
\newtheorem{corollary}[theorem]{Corollary}

\newtheorem{remark}[theorem]{Remark}

\newcommand{\R}{\mathbb{R}}
\newcommand{\E}{\mathbb{E}}
\newcommand{\1}{\mathbf{1}}
\newcommand{\ip}[2]{\left\langle #1,#2\right\rangle}
\newcommand{\Gap}{\operatorname{Gap}}

\title{Dynamic Averaging on Regular Graphs}
\author{Dean Kraizberg\thanks{School of Mathematical Sciences, Tel Aviv University, deank@mail.tau.ac.il}}

\begin{document}
\maketitle

\begin{abstract}
We study a dynamic averaging process on finite regular graphs with bounded, time-varying load arrivals. At each discrete time $t$, an edge is chosen uniformly at random, a load $0\le w_t \le 1$ is introduced, and the total load of its two endpoints together with $w_t$ is divided equally between them. Starting from the flat configuration, we obtain a pairwise concentration bound governed by the effective resistance between vertices and use generic chaining to derive a general upper bound on the expected gap between the largest and smallest loads.

As a consequence, we show that every $d$-regular graph has expected gap $O_d(\sqrt n)$, uniformly in time and over all deterministic arrival sequences. Applying our general bound to the discrete two-dimensional torus yields the sharp $O(\log n)$ upper bound, improving the best previously known bound. For the cycle, whenever the arriving loads are bounded away from zero, we prove that the expected gap is $\Omega(\sqrt n)$ for all sufficiently large times. Together with our upper bound, this confirms the conjecture of Alistarh, Nadiradze, and Sabour that the expected gap on the cycle is of order $\sqrt n$.
\end{abstract}
\noindent\textbf{Keywords:} Dynamic Averaging, Load Balancing, Regular Graphs, Effective Resistance, Generic Chaining. 

\section{Introduction}
Randomized load balancing has its origins in the classical balls-into-bins model. In the classical \(d\)-choice process, \(m\) balls are inserted sequentially into \(n\) bins, and each ball is placed in the least loaded among \(d\) bins chosen uniformly at random. Azar, Broder, Karlin, and Upfal~\cite{AzarBroderKarlinUpfal} proved that, when \(d\ge2\) and \(m=n\), the maximum load is \(\frac{\log\log n}{\log d}+O(1)\) with high probability, a dramatic improvement over the \(\Theta(\frac{\log n}{\log\log n})\) maximum load obtained with a single random choice.

\medskip

Berenbrink, Czumaj, Steger, and V\"ocking~\cite{BerenbrinkCzumajStegerVocking} extended this analysis to the heavily loaded regime \(m=\omega(n)\), showing that the imbalance remains essentially independent of the average load. Peres, Talwar, and Wieder~\cite{PeresTalwarWieder} subsequently introduced graphical balanced allocations, in which the bins are the vertices of a graph and a uniformly chosen edge determines the two possible destinations of each arriving ball. For example, their general bound yields an
\(O(n\log n)\) expected gap when applied to the cycle.

\medskip

A different but closely related line of work concerns averaging
processes, in which neighboring vertices repeatedly replace their loads by their common average. In the static setting, where the initial load is fixed and no new load enters the system, the convergence of the continuous averaging process is closely related to the mixing properties of the underlying graph (see, for example, Aldous and Lanoue \cite{AldousLanoue}). Sauerwald and Sun \cite{SauerwaldSun} studied discrete variants of this process and obtained strong discrepancy bounds for a wide range of network topologies.

\medskip

The dynamic version of the problem combines these two features: load is continually introduced while local averaging attempts to keep the system balanced. Alistarh, Nadiradze, and Sabour~\cite{AlistarhNadiradzeSabour} initiated the study of this process on the cycle. For weighted arrivals with bounded second moment, they obtained an \(O(\sqrt n\log n)\) upper bound on the expected gap, together with a lower bound of order \(n\) for its second moment. Based on this and numerical evidence, they conjectured that the expected gap on the cycle should in fact be of order \(\sqrt n\).

\medskip

Berenbrink, Hintze, Hosseinpour, Kaaser, and Rau~\cite{BerenbrinkHintzeHosseinpourKaaserRau} later studied dynamic averaging on general graphs and considered several variants of the process. Their analysis connected the behavior of the process to the effective resistance, hitting times, and related random-walk quantities of the underlying graph. In particular, their bounds improved the expected gap estimate on the cycle to \(O(\sqrt{n \log n})\). They provided general results for other graph families, including the two-dimensional torus with expected gap $O(\log ^{3/2}n)$ and expanders with expected gap $O(\log n)$. 

\medskip

In this work we study the averaging process on finite regular graphs, allowing an arbitrary deterministic sequence of bounded arriving loads. Our starting point is a pairwise tail bound for the difference between the loads of two vertices. The scale of this estimate is determined by the effective resistance between the vertices, together with the largest effective resistance of an edge. This leads to pairwise bounds that depend on the effective resistance structure of the underlying graph.

\medskip

To pass from these pairwise estimates to the maximum difference over all vertices, we use Talagrand's generic-chaining functionals~\cite{TalagrandOrig,TalagrandGamma}, in the mixed-tail form developed by Dirksen~\cite{DirksenGenericChaining}. This leads to a general upper bound expressed through the effective resistance metric of the graph. In addition to applying to several examples, this form of the bound makes it possible to use known results on effective resistance and cover times to obtain discrepancy estimates for broad classes of regular graphs.

\medskip

We dive into two examples of graph families. Namely, the cycle and the discrete two-dimensional torus, for which our result gives \(O(\sqrt n)\) and \(O(\log n)\) upper bounds on the expected gap, respectively. Adapting the lower-bound argument of Bansal and Feldheim~\cite{BansalFeldheim} shows that the bound for the torus is sharp up to constants. More generally, the relation between the generic-chaining functional and the cover time of a random walk, together with the classical cover-time bound for regular graphs, yields an \(O_d(\sqrt n)\) bound for every \(d\)-regular graph.

\medskip

Finally, we return to the cycle and prove the stronger matching lower bound there. The argument combines the second-moment estimate of Alistarh, Nadiradze, and Sabour with a fourth-moment estimate obtained from our pairwise concentration bound. This shows that, whenever the arriving loads are bounded away from zero, the expected gap on the cycle is of order \(\sqrt n\), thereby confirming the conjecture of Alistarh, Nadiradze, and Sabour.

\section{Main Results}
\label{sec:main-results}

We begin by introducing some notation that will be used throughout this work. Let \(G=(V,E)\) be a finite connected \(d\)-regular graph with \(|V|=n.\) We denote \(e_v\in\mathbb R^V\) the standard basis vector corresponding to \(v\in V\), and \(\1\in\mathbb R^V\) the vector whose all entries are ones.
Fix a deterministic sequence \((w_t)_{t\ge0}\) satisfying\footnote{By rescaling, one may equivalently allow any nonnegative bounded sequence of arriving loads.}
\(
0\le w_t\le1
\)
for every \(t\ge0\). Starting from the flat configuration \(x_i(0)=0\), the process evolves as follows. 

\medskip

At time \(t\), choose an edge \(e=\{u,v\}\) uniformly from \(E\), introduce load \(w_t\), and replace the two endpoint loads by
\[
x_u(t+1)
=
x_v(t+1)
=
\frac{x_u(t)+x_v(t)+w_t}{2}.
\]
Every other coordinate remains unchanged. Since exactly \(w_t\) units of load are added in round \(t\), the mean load at time \(t\) is
\(
\frac1n\sum_{s<t}w_s.
\)
We define the centered load vector by
\[
X_i(t)
:=
x_i(t)-\frac1n\sum_{s<t}w_s.
\]
Then
\(
\sum_{i\in V}X_i(t)=0,
\)
and 
\(
\Gap(t)
:=
\max_{i\in V}X_i(t)-\min_{i\in V}X_i(t)
=
\max_{i\in V}x_i(t)-\min_{i\in V}x_i(t).
\)

\medskip

Let \(\mathcal L\) be the Laplacian matrix of the graph $G$. Namely,
\[
\mathcal L
:=
\sum_{\{u,v\}\in E}
(e_u-e_v)(e_u-e_v)^{\mathsf T},
\]
and let \(\mathcal L^+\) denote its Moore--Penrose pseudoinverse. Since \(G\) is connected,
\(
\ker(\mathcal L)=\operatorname{span}\{\1\},
\)
and \(\mathcal L\mathcal L^+=\mathcal L^+\mathcal L\) is the orthogonal projection onto the zero-sum subspace \(\1^\perp\).

\medskip

For \(u,v\in V\), the \emph{effective resistance} between \(u\) and \(v\) is
\[
R_{\mathrm{eff}}(u,v)
:=
(e_u-e_v)^{\mathsf T}
\mathcal L^+
(e_u-e_v).
\]
See~\cite{KleinRandic,LyonsPeres} for further background on effective resistance and its electrical interpretation. We use the resistance metric
\[
\rho(u,v):=\sqrt{dR_{\mathrm{eff}}(u,v)},
\]
and denote the scaled largest effective resistance of an edge by
\[
\Re_G
:=
 d\max_{\{u,v\}\in E}R_{\mathrm{eff}}(u,v).
\]

\begin{remark} \label{rem:sameER}
    The parameter \(\Re_G\) is always bounded by $d$. indeed, the edge \(\{u,v\}\) itself is a unit-resistance path, so \(R_{\mathrm{eff}}(u,v)\le1\) and hence \(\Re_G\le d\). Whenever the effective resistances of the edges are equal (for example when \(G\) is additionally edge-transitive), we can compute explicitly the value of the parameter 
\[
\Re_G
=\frac 2 n \sum_{\{u,v\}\in E}R_{\mathrm{eff}}(u,v)
=\frac 2 n\sum_{\{u,v\}\in E}
(e_u-e_v)^{\mathsf T}\mathcal L^+(e_u-e_v)=
\frac 2 n\operatorname{tr}\left(
\mathcal L^+
\sum_{\{u,v\}\in E}
(e_u-e_v)(e_u-e_v)^{\mathsf T}
\right)=
\]
\[
\frac 2 n \operatorname{tr}(\mathcal L^+\mathcal L)
= \frac{2(n-1)}{n}.
\]
\end{remark}

We present now the main results of this work. Our first main result gives pairwise concentration with respect to the effective resistance metric on the graph.

\begin{theorem}[Pairwise tail bound]
\label{thm:pairwise-tail}
There exists a universal constant \(c>0\) such that, for every finite connected \(d\)-regular graph \(G=(V,E)\), every deterministic sequence \(0\le w_t\le1\), every \(t\ge0\), every \(u,v\in V\), and every \(x\ge0\),
\[
\Pr\left(
|X_v(t)-X_u(t)|\ge x
\right)
\le
2\exp\left(
-c\min\left\{
\frac{x^2}{dR_{\mathrm{eff}}(u,v)+\Re_G+2},
\frac{x}{\Re_G+1}
\right\}
\right).
\]
\end{theorem}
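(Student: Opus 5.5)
We plan to write the process linearly, $X(t+1)=P_tX(t)+\xi_t$. Here $P_t=I-\tfrac12(e_a-e_b)(e_a-e_b)^{\mathsf T}$ for the chosen edge $\{a,b\}$, which is a symmetric projection, and $\xi_t=w_t\big(\tfrac12(e_a+e_b)-\tfrac1n\1\big)$ is the centered injection. Unrolling from $X(0)=0$ gives
\[
X(t)=\sum_{s<t}P_{t-1}\cdots P_{s+1}\,\xi_s .
\]
Rather than control these products directly, we use a Doob martingale in the test direction. Set $f=e_v-e_u$ and consider the Lyapunov-type functional
\[
M_s=\ip{f}{\mathcal L^+ \text{-adapted quantity}}.
\]
Concretely, we use that $\E[P_s]=I-\frac{1}{2|E|}\mathcal L=I-\frac1{nd}\mathcal L$.

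The key steps are as follows.
\begin{enumerate}[label=(\roman*)]
\item Fix $t$ and the target $Y=\ip{f}{X(t)}$. Form the Doob martingale $Z_s=\E[Y\mid\mathcal F_s]$ for $s=0,\dots,t$. Since the maps are affine and the edge choices are independent, $\E[Y\mid\mathcal F_s]=\ip{g_s}{X(s)}+c_s$, where $g_s=(I-\frac{1}{nd}\mathcal L)^{t-s}f$ and $c_s$ is deterministic. The increment at step $s$ is then
\[
D_s=\ip{g_{s+1}}{P_sX(s)+\xi_s}-\E[\,\cdot\mid\mathcal F_s].
\]
\item Bound the increments. We have $|D_s|\lesssim |(g_{s+1})_a-(g_{s+1})_b|\cdot(|X_a(s)-X_b(s)|+1)$, plus an averaged term of the same kind. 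The conditional variance is $\frac1{nd}\sum_{\{a,b\}}(g_a-g_b)^2(\ldots)^2$, so it is naturally weighted by the Dirichlet form $g^{\mathsf T}\mathcal L g$.
\item Sum the Dirichlet forms in time:
\[
\sum_s \frac{1}{nd}\,g_{s+1}^{\mathsf T}\mathcal L g_{s+1}\le f^{\mathsf T}\mathcal L^+ f\cdot O(1)=O(R_{\mathrm{eff}}(u,v)).
\]
This holds because $\sum_k (1-\lambda/(nd))^{2k}\lambda/(nd)\le 1/(\lambda\cdot(\ldots))$ spectrally, i.e.\ it equals $\sum_k \frac{\lambda}{nd}(1-\frac{\lambda}{nd})^{2k}\lesssim 1/\lambda\cdot$ const after normalization. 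The factor $d$ appears once the $\frac1{nd}$ normalization is tracked against the time scale, giving the $dR_{\mathrm{eff}}(u,v)$ term.
\item Apply Freedman/Bernstein for martingales. This gives the claimed mixed sub-Gaussian/sub-exponential tail, provided the conditional variance proxy and the increment sizes are controlled.
\end{enumerate}

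The main obstacle is that the increments involve the random edge differences $|X_a(s)-X_b(s)|$, which are not a priori bounded. We plan to handle this self-referentially. We first bound $\E[(X_a-X_b)^2]$ and its exponential moments for edges uniformly in time, using the same martingale argument with $f=e_a-e_b$; this is where the $\Re_G$ terms enter, as $d R_{\mathrm{eff}}(a,b)\le \Re_G$. We then run a bootstrap on the exponential moment $\sup_{s,\,\text{edges}}\E e^{\theta|X_a(s)-X_b(s)|}$ with $\theta\sim 1/(\Re_G+1)$. This is why the linear regime scales as $x/(\Re_G+1)$ and the quadratic regime picks up the additive $\Re_G+2$.

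To avoid circularity, we would instead use the exact identity
\[
\E\big[\ip{f}{X}^2\big]\text{ evolution: }\ \E\|X(s+1)\|^2_{\mathcal L^+\text{-type}}
\]
to get the second moment cleanly. We would then use a Bernstein inequality for martingales with conditionally sub-exponential increments (Freedman with a Bernstein moment condition), feeding in the conditional exponential-moment bound rather than almost-sure bounds. We would try first to get this conditional sub-exponential bound for edge differences via a direct exponential supermartingale on the potential $\ip{X}{\mathcal L^+X}$-type quantity. That step is where the real work lies.
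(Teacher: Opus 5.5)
\textbf{There is a genuine gap.} The step you flag as ``where the real work lies'' is the whole difficulty, and your plan for it does not close.

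Freedman/Bernstein for martingales needs \emph{conditional} control. You need either a.s.\ bounds on $D_s$, or bounds on $\E[e^{\lambda D_s}\mid\mathcal F_s]$ together with a bound on the predictable quadratic variation. Here that variation contains $\sum_s\frac1{4|E|}\sum_{\{a,b\}\in E}\bigl((g_{s+1})_a-(g_{s+1})_b\bigr)^2\bigl(X_a(s)-X_b(s)\bigr)^2$. A bootstrap on the unconditional quantity $\sup_{s,e}\E e^{\theta|X_a(s)-X_b(s)|}$ supplies neither. A tail bound for this random weighted sum of \emph{squared} edge differences is at least as hard as the theorem itself, so the circularity is not removed. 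Several key objects are also never specified: the ``$\mathcal L^+$-adapted quantity'' in $M_s$ and the ``$\mathcal L^+$-type'' norm.

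\textbf{Steps (ii)--(iii) misplace the variance.}
\begin{itemize}
\item Since $g_{s+1}$ is zero-sum, the injection part of $D_s$ is $w_s\bigl((g_{s+1})_a+(g_{s+1})_b\bigr)/2$. This is not bounded by $|(g_{s+1})_a-(g_{s+1})_b|$.
\item Its conditional variance is about $\frac1n\|g_{s+1}\|_2^2$. With $\bar P=I-\frac1{nd}\mathcal L$, the sum $\frac1n\sum_{k\ge0}\|\bar P^kf\|_2^2$ lies between $\frac d2f^{\mathsf T}\mathcal L^+f$ and $d\,f^{\mathsf T}\mathcal L^+f$. This, not the Dirichlet form, is where $dR_{\mathrm{eff}}(u,v)$ comes from.
\item Mode by mode in the eigenbasis of $\mathcal L$, the Dirichlet-form sum is $\frac1{nd}\sum_{k\ge0}g_k^{\mathsf T}\mathcal Lg_k=\sum_\lambda\hat f_\lambda^2/(2-\lambda/(nd))\le\|f\|_2^2=2$. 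So it is $O(1)$, not $O(R_{\mathrm{eff}}(u,v))$; the latter fails on $K_{d+1}$, where $R_{\mathrm{eff}}=2/(d+1)$. Multiplied by edge differences of size $\sqrt{\Re_G+1}$, it heuristically accounts only for the additive $\Re_G+2$.
\end{itemize}

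\textbf{The missing idea is to move the randomness from the state to the test vector.} The paper uses the exact recursion $M_{t+1}(\theta)=\frac1{|E|}\sum_e e^{w_t\langle\theta,c_e\rangle}M_t(A_e\theta)$ for $M_t(\theta)=\E e^{\langle\theta,X(t)\rangle}$. In this recursion $X(s)$ never appears.
\begin{itemize}
\item Each $A_e$ preserves $\1^\perp$ and does not increase $\|\cdot\|_1$. So the set $\{\theta\in\1^\perp:\|\theta\|_1\le1/(3(\Re_G+1))\}$ is invariant.
\item With $R(\theta)=\frac d2\langle\theta,\mathcal L^+\theta\rangle+\frac{\Re_G+2}4\|\theta\|_2^2$, the bound $M_t(\theta)\le e^{4R(\theta)}$ then follows by induction on $t$ from the deterministic inequality $\frac1{|E|}\sum_e\exp\bigl(wz_e+4(R(A_e\theta)-R(\theta))\bigr)\le1$.
\item That inequality uses the identities $\sum_e\delta_e(\psi_u-\psi_v)=\|\theta\|_2^2$ and $\sum_ez_e^2=\frac d2\|\theta\|_2^2-\frac14\theta^{\mathsf T}\mathcal L\theta$, where $\psi=\mathcal L^+\theta$. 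It also uses the voltage bound $|\psi_u-\psi_v|\le\frac{\|\theta\|_1}2R_{\mathrm{eff}}(u,v)$ and $e^y\le1+y+y^2$ for $|y|\le\frac12$.
\item Chernoff with $\theta=s(e_v-e_u)$, $0\le s\le1/(6(\Re_G+1))$, then gives both regimes.
\end{itemize}
In martingale language, this is an exponential supermartingale for the time-reversed dual chain $\theta\mapsto A_e\theta$. Its increments $wz_e$ are bounded by $\|\theta\|_1$, and its random quadratic variation is paid for by the decrease of $R(\theta)$. These are exactly the two things your forward Doob martingale cannot provide. The potential that works lives on $\theta$, not on a state quantity like $\langle X,\mathcal L^+X\rangle$.
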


We present the definition of the \emph{generic chaining functional}, originally introduced by Talagrand~\cite{TalagrandOrig, TalagrandGamma}. A sequence \(\mathcal A=(\mathcal A_k)_{k\ge0}\) of partitions of a finite set \(V\) is called admissible if
\[
|\mathcal A_0|=1,
\qquad \text{and} \qquad
|\mathcal A_k|\le2^{2^k}
\]
for every \(k\ge1\). If \(A_k(v)\) is the member of \(\mathcal A_k\) containing \(v\), then for a semimetric \(r\) on \(V\) and \(\alpha\ge1\) define
\[
\gamma_\alpha(V,r)
:=
\inf_{\mathcal A}
\sup_{v\in V}
\sum_{k\ge0}
2^{k/\alpha}
\operatorname{diam}(A_k(v),r),
\]
where the infimum is taken over all admissible sequences. The second main result shows that the expected gap is controlled by $\gamma_2(V,\rho)$.

\begin{theorem}[Expected gap bound]
\label{thm:general-upper}
There exists a universal constant \(C>0\) such that, for every finite connected \(d\)-regular graph \(G=(V,E)\), every deterministic sequence \(0\le w_t\le1\), and every \(t\ge0\),
\[
\E[\Gap(t)]
\le
C\left(
\gamma_2(V,\rho)
+
(\Re_G+1)\log(2n)
\right).
\]
In particular, if
\(
R_{\max}(G)
=
\max_{u,v\in V}R_{\mathrm{eff}}(u,v),
\)
then
\[
\E[\Gap(t)]
\le
C\left(
\sqrt{dR_{\max}(G)\log(2n)}
+
(\Re_G+1)\log(2n)
\right).
\]
\end{theorem}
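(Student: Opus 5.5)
The plan is to feed the pairwise tail bound of \cref{thm:pairwise-tail} into Dirksen's mixed-tail generic chaining bound. Fix $t$ and consider the centered process $(X_v(t))_{v\in V}$. Its increments $X_v(t)-X_u(t)$ satisfy a Bernstein-type tail, and the two tail scales can be read off directly from \cref{thm:pairwise-tail}. We define two semimetrics on $V$:
\[
d_2(u,v):=\sqrt{\rho(u,v)^2+\Re_G+2}\,\1_{u\neq v},\qquad d_1(u,v):=(\Re_G+1)\1_{u\neq v}.
\]
Then \cref{thm:pairwise-tail} states exactly that $\Pr(|X_v-X_u|\ge x)\le 2\exp(-c\min\{x^2/d_2(u,v)^2,\,x/d_1(u,v)\})$. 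Dirksen's theorem~\cite{DirksenGenericChaining} for such processes gives
\[
\E\sup_{u,v\in V}|X_v-X_u|\le C\bigl(\gamma_2(V,d_2)+\gamma_1(V,d_1)\bigr).
\]
Both triangle inequalities for $d_1,d_2$ are immediate: $d_1$ is a multiple of the discrete metric, and $d_2\le\rho+\sqrt{\Re_G+2}$. In fact we only need this upper bound, since the chaining bound can be applied with any dominating semimetric. Since $\Gap(t)=\sup_{u,v}(X_v-X_u)$, it then remains to bound the two functionals.

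For $\gamma_1(V,d_1)$, we take partitions in which $\mathcal A_k$ consists of singletons once $2^{2^k}\ge n$, that is, for $k\ge k_0:=\lceil\log_2\log_2 n\rceil$. Before that, the sets have diameter at most $\Re_G+1$, so
\[
\gamma_1\le(\Re_G+1)\sum_{k<k_0}2^k\lesssim(\Re_G+1)\log(2n).
\]
For $\gamma_2(V,d_2)$, we use $d_2\le\rho+\sqrt{\Re_G+2}\,\1_{u\ne v}$ together with subadditivity of the diameter, and we intersect an almost-optimal admissible sequence for $\rho$ with the singleton-at-level-$k_0$ sequence. Intersecting two admissible sequences requires the standard index shift $k\mapsto k+1$, which costs only a constant factor. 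This yields
\[
\gamma_2(V,d_2)\lesssim\gamma_2(V,\rho)+\sqrt{\Re_G+2}\sum_{k<k_0}2^{k/2}\lesssim\gamma_2(V,\rho)+\sqrt{(\Re_G+2)\log(2n)}.
\]
The last term is at most $(\Re_G+2)\log(2n)$, so it is absorbed into the second term of the stated bound.

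The ``in particular'' statement follows from the trivial bound $\gamma_2(V,\rho)\lesssim\operatorname{diam}_\rho(V)\sqrt{\log n}$. Using the same singleton partition at level $k_0$, each level $k<k_0$ contributes $2^{k/2}\operatorname{diam}_\rho(V)$, and summing over these levels gives $\sqrt{dR_{\max}\log(2n)}$ up to a constant.

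I expect the main obstacle to be invoking Dirksen's theorem in the precise form needed. That theorem concerns processes with the mixed tail $\Pr(|X_u-X_v|\ge \sqrt{x}\,d_2+x\,d_1)\le 2e^{-x}$, so I must convert our $\min$-form bound into it. Setting $y=\sqrt{x}d_2+xd_1$ and using $\min\{y^2/d_2^2,\,y/d_1\}\ge x$ handles this, with the constant $c$ absorbed by rescaling $d_1,d_2$. The chaining bound controls $\E\sup|X_u-X_{v_0}|$, which is within a factor $2$ of the gap. The remaining nuisance is that $d_2$ is not literally a function of $\rho$ alone, which is why the additive discrete part must be handled separately as above.
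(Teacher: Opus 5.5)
Your proposal is correct and follows essentially the same route as the paper: you convert Theorem~\ref{thm:pairwise-tail} into Dirksen's mixed-tail form with $r_2\asymp\rho+\sqrt{\Re_G+2}\,\1_{\{u\neq v\}}$ and $r_1\asymp(\Re_G+1)\1_{\{u\neq v\}}$, bound the discrete-metric functionals with the trivial-then-singleton admissible sequence switching at $k_0$, and use subadditivity of $\gamma_2$ (you justify this by intersecting admissible sequences with an index shift, where the paper only cites it). The one detail the paper makes explicit and you skip is that Dirksen's Theorem~3.5 at $p=1$ carries an extra term of order $\max_v\E|X_v-X_o|$; this is harmless, since the paper bounds it by the diameters and hence by the $\gamma$-functionals.
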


The following is an immediate consequence of Theorem~\ref{thm:general-upper}, and it gives an improvement over the existing upper bounds on the expected gap of the averaging process performed on the cycle and the torus.

\begin{corollary}
\label{cor:cycle-upper}
Let $C_n$ be the cycle graph with $n$ vertices. There exists a universal constant \(C>0\) such that, for every \(n\ge3\), every deterministic sequence \(0\le w_t\le1\), and every \(t\ge0\), the dynamic averaging process on \(C_n\) satisfies
\[
\E[\Gap(t)]
\le
C\sqrt n.
\]
\end{corollary}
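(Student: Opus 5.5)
We apply the first bound of Theorem~\ref{thm:general-upper} to $G=C_n$, which is $2$-regular, so $d=2$. Two quantities must be controlled: $\Re_{C_n}$ and $\gamma_2(V,\rho)$. The cycle is edge-transitive, so Remark~\ref{rem:sameER} gives $\Re_{C_n}=2(n-1)/n\le 2$. The term $(\Re_G+1)\log(2n)$ is therefore $O(\log n)=O(\sqrt n)$. Note that the second, cruder bound of Theorem~\ref{thm:general-upper} only yields $O(\sqrt{n\log n})$, because $R_{\max}(C_n)\asymp n$. The real work is to show $\gamma_2(V,\rho)=O(\sqrt n)$.

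For the cycle the effective resistance is explicit. Two vertices at cyclic distance $k$ are joined by two parallel paths of lengths $k$ and $n-k$. Hence
\[
R_{\mathrm{eff}}(u,v)=\frac{k(n-k)}{n}\le k,
\qquad
\rho(u,v)=\sqrt{2R_{\mathrm{eff}}(u,v)}\le\sqrt{2\,\mathrm{dist}(u,v)}.
\]
So $\rho$ is dominated by the square root of the graph distance, just like Brownian-motion increments.

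We build the admissible sequence from dyadic arcs. Let $\mathcal A_0=\{V\}$. For $k\ge1$, let $\mathcal A_k$ partition the cycle into $m_k:=\min\{2^{2^k},n\}$ consecutive arcs, each of length at most $\lceil n/m_k\rceil$. Then $|\mathcal A_k|\le 2^{2^k}$. Once $m_k=n$ the parts are singletons with diameter $0$, so the sum is finite. An arc of length $\ell$ has $\rho$-diameter at most $\sqrt{2\ell}$. For every $v$ this gives
\[
\sum_{k\ge0}2^{k/2}\operatorname{diam}(A_k(v),\rho)\;\lesssim\;\sqrt n+\sum_{k\ge1}2^{k/2}\sqrt{\frac{n}{2^{2^k}}}+\sum_{k:\,m_k<n}2^{k/2}.
\]
The middle sum is $\sqrt n\sum_k 2^{k/2-2^{k-1}}=O(\sqrt n)$, since the series converges doubly exponentially. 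The last sum accounts for the rounding $\lceil\cdot\rceil\le 2n/m_k$ when $m_k<n$, and it is harmless. The number of $k$ with $m_k<n$ is at most $\log_2\log_2 n+1$, so that sum is $O(\sqrt{\log n})$. In fact the rounding is already absorbed by the factor $2$ in $\lceil n/m\rceil\le 2n/m$. Thus $\gamma_2(V,\rho)=O(\sqrt n)$.

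Combining the two estimates in Theorem~\ref{thm:general-upper} gives $\E[\Gap(t)]\le C(\sqrt n+\log n)\le C'\sqrt n$ for all $n\ge3$, uniformly in $t$ and $(w_t)$. The only delicate point is this chaining estimate. The rest is the explicit resistance formula and bookkeeping.
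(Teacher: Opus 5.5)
Your proposal is correct and follows the paper's proof essentially step for step. You apply Theorem~\ref{thm:general-upper} with $d=2$, get $\Re_{C_n}\le 2$ from Remark~\ref{rem:sameER}, bound $\rho\le\sqrt{2\operatorname{dist}}$ via $R_{\mathrm{eff}}=k(n-k)/n$, and use the same admissible sequence of $\min\{2^{2^k},n\}$ near-equal consecutive arcs to obtain $\gamma_2(V,\rho)=O(\sqrt n)$ (your handling of the $k=0$ term and the ceiling rounding is just slightly more explicit than the paper's).
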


\begin{corollary}
\label{cor:torus}
Let \(\mathbb T_m^2 = (\mathbb Z/m\mathbb Z)^2\) be the discrete two-dimensional torus with \(n=m^2\) vertices. There exists a universal constant \(C>0\) such that, for every deterministic sequence \(0\le w_t\le1\) and every \(t\ge0\), the dynamic averaging process on \(\mathbb T_m^2\) satisfies
\[
\E[\Gap(t)]
\le
C\log(n).
\]
\end{corollary}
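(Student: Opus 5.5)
We apply \cref{thm:general-upper} to \(\mathbb T_m^2\) with \(d=4\). The graph is edge-transitive, so \cref{rem:sameER} gives \(\Re_G=2(n-1)/n\le2\), and the second term \((\Re_G+1)\log(2n)\) is \(O(\log n)\). Everything therefore reduces to showing \(\gamma_2(V,\rho)=O(\log n)\), where \(\rho=\sqrt{4R_{\mathrm{eff}}}\).

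\emph{The crude bound is not enough.} The second inequality of \cref{thm:general-upper} uses \(R_{\max}\asymp\log n\). This gives only \(\sqrt{\log n\cdot\log n}=\log n\) up to constants, which would actually suffice. So the only input needed is the classical estimate
\[
R_{\mathrm{eff}}(u,v)\le C\bigl(1+\log(1+\operatorname{dist}(u,v))\bigr)\le C'\log n
\]
on the two-dimensional torus. Then
\[
\sqrt{dR_{\max}\log(2n)}\le\sqrt{4C'\log n\cdot\log(2n)}=O(\log n),
\]
and the corollary follows.

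\emph{Proving \(R_{\max}=O(\log n)\).} We use the spectral formula
\[
R_{\mathrm{eff}}(u,v)=\sum_{\lambda\ne0}\frac{|\phi_\lambda(u)-\phi_\lambda(v)|^2}{\lambda}.
\]
The characters are \(\phi_k(x)=m^{-1}e^{2\pi i\langle k,x\rangle/m}\), with eigenvalues
\[
\lambda_k=4-2\cos(2\pi k_1/m)-2\cos(2\pi k_2/m)\ge c\,\frac{|k|^2}{m^2},
\]
where \(|k|\) is measured with representatives in \((-m/2,m/2]\). Bounding \(|\phi_k(u)-\phi_k(v)|^2\le 4/m^2\) gives
\[
R_{\mathrm{eff}}(u,v)\le\sum_{0\ne k}\frac{4/m^2}{c|k|^2/m^2}=\frac4c\sum_{0<|k|\le m}|k|^{-2}=O(\log m).
\]
The last step holds because the number of lattice points with \(|k|\approx r\) is \(O(r)\), so the sum is \(\sum_{r\le m}O(1/r)=O(\log m)\).

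Alternatively, one could cite the known resistance or Green-function asymptotics for \(\mathbb Z^2\) and the torus (see \cite{LyonsPeres}).

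\emph{Main obstacle.} The only real work is this lattice sum estimate. One must check the lower bound on \(\lambda_k\) uniformly over \(k\), using \(1-\cos\theta\ge 2\theta^2/\pi^2\) for \(|\theta|\le\pi\), and the counting of lattice points. Everything else is a direct substitution into \cref{thm:general-upper}, using \(\log(2n)\le2\log n\) for \(n\ge2\).
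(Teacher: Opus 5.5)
Your proof is correct and follows the paper's route: with $\Re_G=2(n-1)/n<2$ from Remark~\ref{rem:sameER}, you plug $R_{\max}=O(\log n)$ into the second estimate of Theorem~\ref{thm:general-upper} to get $O(\log n)$. The only difference is that you verify $R_{\max}=O(\log m)$ directly from the Fourier expansion of $\mathcal L^+$ rather than citing Chandra et al.; your bounds $\lambda_k\ge 16|k|^2/m^2$ and $\sum_{0<|k|\le m}|k|^{-2}=O(\log m)$ are correct, but your heading ``the crude bound is not enough'' contradicts the text under it and should be removed.
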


Building on the connection between effective resistance and cover time of random walks on the graph~\cite{DingLeePeres,ChandraResistance}, we show that the expected gap is bounded by $O(\sqrt n)$ term uniformly for every regular graph with $n$ vertices.

\begin{theorem}
\label{thm:bounded-degree-regular}
Let \(G=(V,E)\) be any finite connected \(d\)-regular graph with \(|V|=n.\)
Then, for every deterministic arrival sequence \(0\le w_t\le1\) and every \(t\ge0\),
\[
\E[\Gap(t)]
\le
C_d\sqrt n,
\]
where $C_d$ is a constant depending only on $d$.
\end{theorem}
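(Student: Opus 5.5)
We will derive the statement from the first bound of Theorem~\ref{thm:general-upper}, that is,
\[
\E[\Gap(t)]\le C\bigl(\gamma_2(V,\rho)+(\Re_G+1)\log(2n)\bigr).
\]
By Remark~\ref{rem:sameER} we have \(\Re_G\le d\), so the second term is \(O(d\log n)\), which is \(O_d(\sqrt n)\). It therefore suffices to show
\[
\gamma_2(V,\rho)\le C_d\sqrt n .
\]
Since \(\rho=\sqrt d\cdot\sqrt{R_{\mathrm{eff}}}\), this reduces to proving \(\gamma_2(V,\sqrt{R_{\mathrm{eff}}})=O_d(\sqrt n)\).

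The main tool will be the Ding--Lee--Peres theorem~\cite{DingLeePeres}. It states that the cover time \(t_{\mathrm{cov}}(G)\) of simple random walk satisfies
\[
t_{\mathrm{cov}}(G)\asymp |E|\,\gamma_2(V,\sqrt{R_{\mathrm{eff}}})^2,
\]
with universal constants. We only need the lower bound \(\gamma_2(V,\sqrt{R_{\mathrm{eff}}})^2\lesssim t_{\mathrm{cov}}/|E|\). One can also reach this through the Gaussian free field. The majorizing measure theorem gives \(\gamma_2(V,\sqrt{R_{\mathrm{eff}}})\asymp \E\max_v \eta_v\), where \(\eta\) is the GFF with \(\E(\eta_u-\eta_v)^2=R_{\mathrm{eff}}(u,v)\). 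Ding--Lee--Peres then compare \((\E\max\eta)^2\) with \(t_{\mathrm{cov}}/|E|\). We then combine this with the classical cover-time bound for regular graphs: every connected \(d\)-regular graph on \(n\) vertices has \(t_{\mathrm{cov}}=O(n^2)\). Kahn, Linial, Nisan and Saks prove this via resistance / spanning-tree arguments, and Feige's bound is also available. Since \(|E|=dn/2\), this yields
\[
\gamma_2(V,\sqrt{R_{\mathrm{eff}}})^2\lesssim \frac{n^2}{dn}=\frac nd .
\]
Hence \(\gamma_2(V,\rho)\lesssim \sqrt d\cdot\sqrt{n/d}=\sqrt n\). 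The constant here is even uniform in \(d\), and the \(d\)-dependence enters only through \(\Re_G\le d\) in the logarithmic term.

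The main obstacle is making sure the cited cover-time estimate is genuinely \(O(n^2)\) uniformly over \(d\)-regular graphs. The generic bound \(t_{\mathrm{cov}}\le 2|E|(n-1)\) from a spanning tree only gives \(O(dn^2)\). That is still sufficient for an \(O_d(\sqrt n)\) statement: it gives \(\gamma_2(V,\sqrt{R_{\mathrm{eff}}})^2\lesssim n\) and so \(\gamma_2(V,\rho)\lesssim\sqrt{dn}\). So I would use the elementary spanning-tree (Aleliunas et al.) bound, which makes the argument fully self-contained apart from Ding--Lee--Peres.

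As a sanity check and fallback, the second form of Theorem~\ref{thm:general-upper} gives \(\sqrt{dR_{\max}\log n}\). For regular graphs one has \(R_{\max}=O(n/d)\), since a \(d\)-regular graph has diameter \(O(n/d)\) and resistance is at most distance. This already gives \(O(\sqrt{n\log n})\), and the chaining step via cover time is what removes the \(\sqrt{\log n}\) factor.
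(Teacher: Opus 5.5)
Your argument is correct and matches the paper's proof: you use Theorem~\ref{thm:general-upper} and $\Re_G\le d$ to reduce to bounding $\gamma_2(V,\rho)$. You then combine the Ding--Lee--Peres bound $\gamma_2(V,\sqrt{R_{\mathrm{eff}}})\lesssim\sqrt{t_{\mathrm{cov}}/|E|}$ with the Kahn--Linial--Nisan--Saks estimate $t_{\mathrm{cov}}=O(n^2)$ to get $\gamma_2(V,\rho)\lesssim\sqrt n$. Your spanning-tree fallback $t_{\mathrm{cov}}\le 2|E|(n-1)$ is also valid and gives the weaker but sufficient $\gamma_2(V,\rho)\lesssim\sqrt{dn}$, so the KLNS input is only needed to make the $\gamma_2$ term uniform in $d$.
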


Whenever the arriving loads are bounded away from zero, an identical argument to that of Bansal and Feldheim~\cite{BansalFeldheim} gives a universal lower bound on the expected gap for regular graphs. 

\begin{theorem}~\cite[Theorem~1.2]{BansalFeldheim}]
\label{thm:main-lower}
Let \(G=(V,E)\) be a finite connected \(d\)-regular graph with \(|V|=n\). For every \(\delta>0\), there exists a constant \(c_\delta>0\) such that for every deterministic sequence \(0<\delta\le w_t\le  1\),
\[
\sup_{t\ge0}\E[\Gap(t)]
\ge
c_{\delta}\log n.
\]
\end{theorem}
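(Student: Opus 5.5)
The plan is to prove the bound at the single time $t=T:=\lfloor \tfrac14 n\log n\rfloor$. The mechanism is a vertex that the process never touches before time $T$: its load stays frozen at zero while the mean load keeps growing. Since the process starts from the flat configuration $x_i(0)=0$, any vertex $v$ that is not an endpoint of any chosen edge during rounds $0,\dots,T-1$ still has $x_v(T)=0$. Because $w_s\ge\delta$, the mean load at time $T$ is $\frac1n\sum_{s<T}w_s\ge \delta T/n$. Hence $X_v(T)\le -\delta T/n$. Since $\sum_i X_i(T)=0$ forces $\max_i X_i(T)\ge 0$, on the event that such a $v$ exists we get
\[
\Gap(T)\ \ge\ \frac{\delta T}{n}\ \ge\ \frac{\delta}{8}\log n
\]
for $n$ large. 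It then suffices to show that $\Pr(U\neq\emptyset)\ge 1/2$, where $U$ is the set of vertices untouched in the first $T$ rounds. This gives $\sup_t\E[\Gap(t)]\ge \E[\Gap(T)]\ge \frac{\delta}{16}\log n$. Small $n$ are absorbed into $c_\delta$ using the trivial bound $\Gap\ge 0$ together with, say, $\E[\Gap(1)]\ge \delta/2$.

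Showing $\Pr(U\neq\emptyset)\ge1/2$ is a second-moment computation using only the regularity of $G$. Each vertex has $d$ of the $nd/2$ edges, so in each round it is touched with probability $2/n$, independently across rounds. Thus $p:=\Pr(v\in U)=(1-2/n)^T$, and $\E|U|=np\approx n^{1/2}\to\infty$. For a pair $u\neq v$ the union of their incident edges has size $2d$ if they are non-adjacent and $2d-1$ if they are adjacent. So
\[
p_{uv}=\Bigl(1-\frac{4}{n}\Bigr)^T\ \text{(non-adjacent)}, \qquad p_{uv}=\Bigl(1-\frac{4d-2}{nd}\Bigr)^T\ \text{(adjacent)}.
\]
For non-adjacent pairs, $(1-4/n)\le(1-2/n)^2$ gives $p_{uv}\le p^2$, so these pairs contribute non-positively to the variance. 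The $nd/2$ adjacent pairs each contribute at most $p_{uv}\le e^{-(4-2/d)T/n}\le n^{-1/2}$, for a total of $O(d\,n^{1/2})$. Hence $\operatorname{Var}|U|\le \E|U|+O(d\sqrt n)$, while $(\E|U|)^2\asymp n$. Chebyshev, or Paley--Zygmund, then gives $\Pr(U=\emptyset)\le \operatorname{Var}|U|/(\E|U|)^2=O(d/\sqrt n)\to0$.

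The main obstacle is this variance estimate, specifically the correlation between adjacent vertices. One must check that the positive correlation from a shared edge is only a bounded multiplicative factor per adjacent pair, and that these pairs number only $O(nd)$. The choice $T=\frac14 n\log n$, i.e.\ a constant below $\frac12 n\log n$, keeps $\E|U|$ polynomially large while still letting the mean rise by order $\log n$. If the dependence on $d$ in the $O(d/\sqrt n)$ term is a concern, I would simply take $T=\epsilon n\log n$ with $\epsilon$ smaller. This makes $\E|U|\approx n^{1-2\epsilon}$ and the adjacent-pair contribution $O(d\,n^{1-4\epsilon+2\epsilon/d})$, still negligible relative to $n^{2-4\epsilon}$ whenever $d\le n^{1/2}$, say. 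For larger $d$ one can instead use the cruder bound $\Pr(U=\emptyset)\le 1/\E|U| + \max_{u\sim v}p_{uv}/p^2$, where $p_{uv}/p^2\le (1+O(1/(nd)))^T\to1$.
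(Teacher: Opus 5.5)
Your proposal is correct and follows the same coupon-collector route as the paper: at a time $T=\Theta(n\log n)$ some vertex is untouched with constant probability, so its load is still $0$ while the mean has risen to at least $\delta T/n=\Omega(\delta\log n)$. You also supply the second-moment computation (non-adjacent pairs negatively correlated, adjacent pairs controlled) that the paper only cites to Bansal--Feldheim.

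Two small fixes are needed. First, in your large-$d$ fallback the bound should read $\Pr(U=\emptyset)\le 1/\E|U|+\bigl(\max_{u\sim v}p_{uv}/p^2-1\bigr)_+$; as written, without the $-1$, it is vacuous. Second, absorbing small $n$ via $\E[\Gap(1)]\ge\delta/2$ needs $n\ge3$: for $n=2$ the gap is identically zero, so the statement itself implicitly assumes $n\ge 3$.
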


In particular, this shows that the upper bound for the expected gap of the averaging process on the torus is sharp. We conclude this work with a proof of a matching lower bound for the expected gap of the process on the cycle, whenever the loads are bounded away from zero.

\begin{theorem}
\label{thm:cycle-lower}
For every \(\delta>0\), there exists a constant \(c_\delta>0\) such that for every \(n\ge3\), every deterministic sequence \(0<\delta\le w_t\le1\), and every sufficiently large\footnote{depending on \(n\).} \(t\), the process on \(C_n\) satisfies
\[
\E[\Gap(t)]
\ge
c_\delta\sqrt n.
\]
\end{theorem}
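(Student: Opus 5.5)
The plan is a second-moment/fourth-moment (Paley--Zygmund type) argument applied to a single pairwise difference on the cycle. I would pick two antipodal vertices \(u,v\) of \(C_n\), at graph distance \(\lfloor n/2\rfloor\), and set \(D(t):=X_v(t)-X_u(t)\). Since \(\Gap(t)\ge |D(t)|\), it suffices to show \(\E|D(t)|\ge c_\delta\sqrt n\) for all large \(t\).

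The lower bound on \(\E|D|\) will come from the inequality
\[
\E|D| \ge \frac{(\E D^2)^{3/2}}{(\E D^4)^{1/2}},
\]
which follows from Hölder applied as \(\E D^2=\E|D|^{1/2}|D|^{3/2}\le (\E|D|)^{1/2}(\E|D|^3)^{1/2}\), together with \(\E|D|^3\le(\E D^2\,\E D^4)^{1/2}\). Two moment bounds are then needed.

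\textbf{Upper bound on the fourth moment.} This is easy given \cref{thm:pairwise-tail}. On the cycle, \(d=2\) and \(R_{\mathrm{eff}}(u,v)=k(n-k)/n\le n/4\). By \cref{rem:sameER}, \(\Re_G=2(n-1)/n\le 2\). Integrating the mixed tail then gives
\[
\E D^4\le C(n^2+1)\le C' n^2,
\]
uniformly in \(t\) and in the arrival sequence.

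\textbf{Lower bound on the second moment.} This is the main obstacle: I need \(\E D(t)^2\ge c_\delta n\) for all large \(t\), for the specific antipodal pair. I would try to adapt the second-moment computation of Alistarh, Nadiradze, and Sabour.
\begin{enumerate}[label=(\roman*)]
\item Write the one-step evolution \(X(t+1)=M_tX(t)+w_t\bigl(\tfrac12(e_a+e_b)-\tfrac1n\1\bigr)\) for the chosen edge \(\{a,b\}\), where \(M_t\) is the random averaging matrix.
\item Track the covariance matrix \(\Sigma(t)=\E X(t)X(t)^{\mathsf T}\). It satisfies a deterministic linear recursion \(\Sigma(t+1)=\mathcal T(\Sigma(t))+w_t^2 Q\), together with cross terms involving \(\E X(t)\), which vanish by symmetry; more precisely, \(\E X(t)=0\) because the distribution is rotation invariant when started from the flat configuration.
\item The injected noise has covariance \(Q\succeq c\,\frac1{|E|}\mathcal L\) on \(\1^\perp\), and \(w_t^2\ge\delta^2\). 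Since \(\mathcal T\) is positivity preserving and monotone, \(\Sigma(t)\) dominates \(\delta^2\) times the solution with constant unit injection.
\item Rotation invariance makes \(\Sigma(t)\) circulant, so Fourier diagonalizes the recursion approximately: \(\mathcal T\) acts on the Fourier modes, with a correction from the quadratic term \(\E[M^{\mathsf T}\cdot M]\) that couples the modes. Summing the geometric series, mode \(j\) receives variance about \(w^2/(\text{eigenvalue }\lambda_j\approx j^2/n^2)\cdot\frac1{n}\cdot\frac{j^2}{n^2}\), which is order one per mode. Then \(\E D^2=\sum_j|\hat{}(e_v-e_u)_j|^2\cdot(\text{mode variance})\gtrsim\sum_{j\text{ odd}}\Theta(1)\cdot\Theta(1)\), which is order \(n\) in the stationary limit.
\end{enumerate}
If the off-diagonal coupling is troublesome, I would instead use the potential \(\Phi=\sum_i X_i^2\) as ANS do. They derive \(\E\Phi\ge c\delta^2 n^2\) for large \(t\), which gives \(\sum_{u,v}\E(X_v-X_u)^2=2n\E\Phi\ge c n^3\). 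Averaging over pairs, some pair has \(\E D^2\ge cn\), and rotation invariance transfers this to a displacement class \(k\). Since \(\Gap\ge|D|\) for any pair, I do not need the antipodal pair specifically, only the existence of one good pair.

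\textbf{Conclusion.} Plugging the two moment bounds into the Hölder inequality gives
\[
\E\Gap(t)\ge \E|D|\ge\frac{(c n)^{3/2}}{C n}=c_\delta\sqrt n.
\]
The "sufficiently large \(t\)" requirement enters only through the need for the second moment to reach its stationary order.
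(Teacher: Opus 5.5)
Your proposal is correct and is essentially the paper's proof. It combines three ingredients:
\begin{itemize}
\item a unit-load second-moment lower bound from Alistarh--Nadiradze--Sabour (their bound on the antipodal potential implies yours, since $\Phi\ge\frac14\phi_{\lfloor n/2\rfloor}$);
\item the transfer to $\delta\le w_t\le1$ by the monotone covariance comparison of your step~(iii), which is exactly the paper's induction $F^{(w)}_t\ge\delta^2F^{(1)}_t$;
\item the $O(n^2)$ fourth-moment bound from Theorem~\ref{thm:pairwise-tail}.
\end{itemize}
Your H\"older inequality and ``some good pair'' averaging can be swapped for the paper's Paley--Zygmund step and antipodal pair without changing anything.

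Do not rely on the Fourier sketch as written, though. On $\1^\perp$ the noise form is $Q=\frac{1}{4|E|}(2dI-\mathcal L)$, which vanishes on the alternating vector of an even cycle. So $Q\succeq c\,\mathcal L/|E|$ is false, and in any case only $Q\succeq0$ is needed for the domination. Also, the stationary variance of a low mode $j$ is of order $n^2/j^2$, not order one, and it is these low modes that produce $\E D^2\asymp n$. The argument is carried by your Alistarh--Nadiradze--Sabour fallback together with step~(iii), not by the Fourier computation.
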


In particular, this shows that the bounds for the expected gap of the averaging process on the cycle are sharp, thus proving the conjecture of Alistarh, Nadiradze, and Sabour~\cite{AlistarhNadiradzeSabour} that the expected gap is of order \(\sqrt n\).

\paragraph{Proof overview.}
We first prove the general upper bounds. Writing the centered update corresponding to a selected edge \(e\) as \( X(t+1)=A_eX(t)+w_tc_e \) gives a recursion for the moment generating function
\( M_t(\theta)=\E\exp\bigl(\ip{\theta}{X(t)}\bigr).\)
We define 
\[
R(\theta)=\frac d2\ip{\theta}{\mathcal L^+\theta}+\frac{\Re_G+2}{4}\|\theta\|_2^2.
\]
The main technical part of the proof is to show that, for every zero-sum vector \(\theta\) with sufficiently small \(\ell_1\)-norm,
\[
M_t(\theta)
\le
\exp\left(
4R(\theta)
\right).
\]
We first apply this bound with \(\theta=s(e_v-e_u)\), together with Chernoff's inequality, to obtain the pairwise tail bound of Theorem~\ref{thm:pairwise-tail}. Dirksen's mixed-tail generic-chaining theorem is then applied to these pairwise estimates, yielding Theorem~\ref{thm:general-upper}.

The proof of the moment generating function bound itself is given afterwards, and uses the effect of a single averaging step on the quadratic expression on the right.

\medskip

We next apply the general bound to the cycle and torus graphs. For the cycle a direct calculation of the effective resistance yields the \(O(\sqrt n)\) upper bound, and for the torus, the known estimate \(R_{\max}(G)=O(\log n)\) gives an \(O(\log n)\) bound. 

\medskip

The relation between \(\gamma_2\) and the cover time, together with the \(O(n^2)\) cover-time bound for regular graphs, gives that for every regular graph
\[
\E[\Gap(t)]=O_d(\sqrt n).
\]

Finally, for the cycle we use the lower bound of Alistarh, Nadiradze, and Sabour for the expected squared difference between opposite vertices, while Theorem~\ref{thm:pairwise-tail} gives a corresponding fourth-moment bound. The Paley--Zygmund inequality then yields an \(\Omega(\sqrt n)\) expected gap for unit arrivals, and a second-moment comparison extends the result to every deterministic sequence satisfying \(\delta\le w_t\le1\).

%\paragraph{Organization.} In Section~\ref{sec:general-proof} we prove Theorem~\ref{thm:pairwise-tail} and Theorem~\ref{thm:general-upper}. Namely, in~\ref{sec:2.2fromMGFBound} we show how Theorem~\ref{thm:pairwise-tail} follows from the moment generating function bound, and in~\ref{sec:2.3from2.2} we show how from Theorem~\ref{thm:pairwise-tail} follows Theorem~\ref{thm:general-upper}. In~\ref{sec:MGFBound} we prove the moment generating function bound. All the additional proofs appear in Section~\ref{sec:applications}

\section{Proof of the Main Results}
\label{sec:general-proof}

We now prove Theorem~\ref{thm:pairwise-tail} and Theorem~\ref{thm:general-upper}. Throughout this section, \(G=(V,E)\) is a fixed finite connected \(d\)-regular graph, \(n=|V|\).

\medskip

All constants denoted by \(c\) or \(C\) are universal unless another dependence is explicitly indicated, and the same notation may be used for different universal constants at different occurrences.

\subsection{Bounding the Moment Generating Function}

For an edge \(e=\{u,v\}\in E\), define
\[
A_e
:=
I-\frac12(e_u-e_v)(e_u-e_v)^{\mathsf T}
\]
and
\[
c_e
:=
\frac12(e_u+e_v)-\frac1n\1.
\]
The matrix \(A_e\) replaces the coordinates at \(u\) and \(v\) by their common average. Indeed, if \(\theta\in\R^V\), then
\[
(A_e\theta)_u=(A_e\theta)_v=\frac{\theta_u+\theta_v}{2},
\]
while every other coordinate is unchanged. Consequently, if the edge \(e\) is selected at time \(t\), then the centered load vector satisfies
\[
X(t+1)=A_eX(t)+w_tc_e.
\]

For \(\theta\in\R^V\), denote the moment generating function
\[
M_t(\theta)
=
\E\exp\bigl(\ip{\theta}{X(t)}\bigr).
\]
Since \(X(t)\) has zero sum, adding a multiple of \(\1\) to \(\theta\) does not change \(M_t(\theta)\). We shall therefore work throughout on the zero-sum subspace
\[
\1^\perp
=
\left\{
\theta\in\R^V:
\sum_{i\in V}\theta_i=0
\right\}.
\]

\begin{lemma}
\label{lem:mgf-recursion}
For every \(\theta\in\R^V\),
\[
M_{t+1}(\theta)
=
\frac1{|E|}
\sum_{e\in E}
\exp\bigl(w_t\ip{\theta}{c_e}\bigr)
M_t(A_e\theta).
\]
\end{lemma}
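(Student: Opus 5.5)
The plan is to condition on the edge chosen at time \(t\) and use that \(A_e\) is symmetric. Let \(\mathcal F_t\) be the \(\sigma\)-algebra generated by the first \(t\) edge choices, so that \(X(t)\) is \(\mathcal F_t\)-measurable. Let \(e_t\) be the edge selected at time \(t\); it is uniform on \(E\) and independent of \(\mathcal F_t\). The arrival \(w_t\) is deterministic. On the event \(\{e_t=e\}\), the update rule established above gives \(X(t+1)=A_eX(t)+w_tc_e\), and therefore
\[
\ip{\theta}{X(t+1)}=\ip{\theta}{A_eX(t)}+w_t\ip{\theta}{c_e}.
\]

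The key algebraic step is to move \(A_e\) onto \(\theta\). Since \(A_e=I-\frac12(e_u-e_v)(e_u-e_v)^{\mathsf T}\) is symmetric, we have \(\ip{\theta}{A_eX(t)}=\ip{A_e\theta}{X(t)}\). Hence
\[
\exp\bigl(\ip{\theta}{X(t+1)}\bigr)=\exp\bigl(w_t\ip{\theta}{c_e}\bigr)\exp\bigl(\ip{A_e\theta}{X(t)}\bigr)
\]
on \(\{e_t=e\}\). The first factor is a deterministic number.

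Next, apply the law of total expectation over the value of \(e_t\). By independence of \(e_t\) from \(\mathcal F_t\),
\[
\E\bigl[\exp(\ip{A_e\theta}{X(t)})\,\1_{\{e_t=e\}}\bigr]=\frac1{|E|}\,\E\exp\bigl(\ip{A_e\theta}{X(t)}\bigr)=\frac1{|E|}M_t(A_e\theta).
\]
Summing over \(e\in E\) yields the stated identity.

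There is no real obstacle here. The points to state carefully are the independence of the edge choice at time \(t\) from \(X(t)\), and the fact that the quantities involved are finite. Finiteness holds because \(X(t)\) takes finitely many values, as there are finitely many edge sequences of length \(t\). No zero-sum assumption on \(\theta\) is needed, consistent with the lemma being stated for all \(\theta\in\R^V\).
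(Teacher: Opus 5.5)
Your proof is correct and follows the paper's argument: condition on the edge selected at time \(t\), use the symmetry of \(A_e\) to rewrite \(\ip{\theta}{A_eX(t)}\) as \(\ip{A_e\theta}{X(t)}\), and average over the uniform edge choice. Your remarks on the independence of \(e_t\) from \(\mathcal F_t\) and on the finiteness of the expectations make explicit what the paper leaves implicit.
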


\begin{proof}
Condition on the edge \(e\) selected at time \(t\). Since \(A_e\) is symmetric,
\[
\ip{\theta}{X(t+1)}
=
\ip{\theta}{A_eX(t)+w_tc_e}
=
\ip{A_e\theta}{X(t)}
+
w_t\ip{\theta}{c_e}.
\]
Taking expectation and averaging over the uniform choice of \(e\) gives the stated recursion.
\end{proof}

For every zero-sum vector \(\theta\), define
\[
R(\theta)
:=
\frac d2\ip{\theta}{\mathcal L^+\theta}
+
\frac{\Re_G+2}{4}\|\theta\|_2^2.
\]
Both terms are nonnegative. The first is an effective resistance energy associated with the Laplacian. The second is a Euclidean correction term. Its coefficient is chosen so that, after one averaging step, the quadratic contribution coming from the effective resistance term is controlled uniformly over all edges, including those whose effective resistance is close to the maximum, appearing in the definition of \(\Re_G\).

\begin{proposition}
\label{prop:mgf-bound}
For every \(t\ge0\) and every \(\theta\in \mathbf 1 ^\perp\) satisfying
\(\|\theta\|_1 \le \frac{1}{3(\Re_G+1)},\) one has
\[
M_t(\theta)
\le
\exp\bigl(4R(\theta)\bigr).
\]
\end{proposition}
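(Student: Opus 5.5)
Induction on \(t\), with the stronger and more convenient hypothesis \(M_t(\theta)\le\exp(4R(\theta))\) for all zero-sum \(\theta\) with \(\|\theta\|_1\le\frac1{3(\Re_G+1)}\). The base case \(t=0\) is trivial since \(X(0)=0\) gives \(M_0\equiv1\).

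For the inductive step, use Lemma~\ref{lem:mgf-recursion}. The matrix \(A_e\) preserves zero-sum vectors, and it does not increase \(\|\cdot\|_1\) because it averages two coordinates. Hence \(A_e\theta\) is admissible and
\[
M_{t+1}(\theta)\le\frac1{|E|}\sum_{e\in E}\exp\bigl(w_t\ip{\theta}{c_e}+4R(A_e\theta)\bigr).
\]
Write \(a_e=\theta_u-\theta_v\). Since \(\theta\) is zero-sum, \(\ip{\theta}{c_e}=\frac12(\theta_u+\theta_v)=:b_e\). Both \(|a_e|\) and \(|b_e|\) are at most \(\|\theta\|_1\), which is small.

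Next I would compute \(R(A_e\theta)-R(\theta)\). Put \(\chi_e=e_u-e_v\), so that \(A_e\theta=\theta-\frac{a_e}{2}\chi_e\). For the Euclidean part,
\[
\|A_e\theta\|_2^2=\|\theta\|_2^2-\frac{a_e^2}{2}.
\]
For the resistance part,
\[
\ip{A_e\theta}{\mathcal L^+A_e\theta}=\ip{\theta}{\mathcal L^+\theta}-a_e\ip{\chi_e}{\mathcal L^+\theta}+\frac{a_e^2}{4}R_{\mathrm{eff}}(e).
\]
Since \(dR_{\mathrm{eff}}(e)\le\Re_G\), the total change \(\Delta_e:=R(A_e\theta)-R(\theta)\) satisfies
\[
\Delta_e\le-\frac d2a_e\ip{\chi_e}{\mathcal L^+\theta}+\frac{\Re_G}{8}a_e^2-\frac{\Re_G+2}{8}a_e^2=-\frac d2a_e\ip{\chi_e}{\mathcal L^+\theta}-\frac{a_e^2}{4}.
\]
Averaging over edges, with \(|E|=nd/2\), gives
\[
\sum_e a_e\ip{\chi_e}{\mathcal L^+\theta}=\ip{\theta}{\mathcal L\mathcal L^+\theta}=\|\theta\|_2^2.
\]
So the linear term averages to \(-\frac{1}{n}\|\theta\|_2^2\). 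This negative first-order drift is the mechanism: in expectation, \(4\Delta_e\) contributes \(-\frac4n\|\theta\|_2^2-\frac{1}{|E|}\sum_e a_e^2\).

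It then suffices to show
\[
\frac1{|E|}\sum_e\exp\bigl(w_tb_e+4\Delta_e\bigr)\le1.
\]
I would use \(e^y\le1+y+y^2\) for \(|y|\le1\) and check that \(|w_tb_e+4\Delta_e|\) is small. Here the constraint \(\|\theta\|_1\le\frac1{3(\Re_G+1)}\) is used: \(d|\ip{\chi_e}{\mathcal L^+\theta}|\) is at most \(\Re_G\|\theta\|_1\)-ish, since \(\mathcal L^+\chi_e\) has oscillation bounded by edge resistances. The first-order terms are handled as follows. The average of \(b_e\) is \(\frac{1}{|E|}\sum_e\frac{\theta_u+\theta_v}{2}=\frac{d}{2|E|}\sum_v\theta_v=0\), so they reduce to \(\frac{4}{|E|}\sum_e\Delta_e\le-\frac4n\|\theta\|_2^2-\frac1{|E|}\sum_ea_e^2\). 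The second-order terms are \(\frac{1}{|E|}\sum_e(b_e+4\Delta_e)^2\lesssim\frac{1}{|E|}\sum_e\bigl(b_e^2+d^2a_e^2\ip{\chi_e}{\mathcal L^+\theta}^2+a_e^4\bigr)\). The term \(\frac1{|E|}\sum_eb_e^2\le\frac{1}{|E|}\sum_e\frac{\theta_u^2+\theta_v^2}{2}=\frac1n\|\theta\|_2^2\) is absorbed by \(-\frac4n\|\theta\|_2^2\) with room to spare. The remaining terms are at most \(C(\Re_G+1)^2\|\theta\|_1^2\,a_e^2\), which is absorbed by \(-a_e^2\) using the \(\ell_1\) smallness.

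The main obstacle is this last bookkeeping: obtaining a clean bound on \(d\,|\ip{\chi_e}{\mathcal L^+\theta}|\) in terms of \(\Re_G\) and \(\|\theta\|_1\), so that the cross terms are dominated. I would try to write \(\theta=\sum_v\theta_v(e_v-e_{v_0})\) and use the bound \(|\chi_e^{\mathsf T}\mathcal L^+(e_x-e_y)|\le R_{\mathrm{eff}}(e)\), which holds because potentials of a unit current flow differ across an edge by at most the edge's effective resistance. This gives \(d|\ip{\chi_e}{\mathcal L^+\theta}|\le\Re_G\|\theta\|_1\), and the remaining constants should then close with the factor \(3\) in the hypothesis.
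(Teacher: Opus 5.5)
Your proposal follows the paper's proof step for step. The shared ingredients are: induction through Lemma~\ref{lem:mgf-recursion}, using that $A_e$ preserves $\1^\perp$ and contracts $\|\cdot\|_1$; the bound $R(A_e\theta)-R(\theta)\le-\frac d2a_e(\psi_u-\psi_v)-\frac14a_e^2$ with $\psi=\mathcal L^+\theta$; the drift identity $\sum_e a_e(\psi_u-\psi_v)=\|\theta\|_2^2$ together with $\sum_e b_e=0$; the oscillation bound for $\mathcal L^+(e_u-e_v)$; and $e^y\le1+y+y^2$.

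One caution on the bookkeeping you deferred. Your voltage bound $d|\psi_u-\psi_v|\le\Re_G\|\theta\|_1$ is a factor $2$ weaker than the paper's Lemma~\ref{lem:voltage-bound}, which splits $\theta=\theta^+-\theta^-$ with $\|\theta^\pm\|_1=\|\theta\|_1/2$. With your bound, using $(x+y)^2\le2x^2+2y^2$ and only $-\sum_e a_e^2$ as budget gives the coefficient $\frac{(5\Re_G+2)^2}{18(\Re_G+1)^2}$. This exceeds $1$ once $\Re_G\gtrsim3$, which does occur (e.g.\ odd $d\ge5$ with a bridge, where $\Re_G=d$). The constants do close in either of two ways:
\begin{itemize}
\item use the paper's sharper bound, which gives $\frac{(3\Re_G+2)^2}{18(\Re_G+1)^2}\le\frac12$;
\item keep the slack you discarded via the exact identity $\sum_e b_e^2=\frac d2\|\theta\|_2^2-\frac14\ip{\theta}{\mathcal L\theta}$, which raises the budget to at least $\frac32\sum_e a_e^2$.
\end{itemize}
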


We postpone the proof of Proposition~\ref{prop:mgf-bound} and first derive the pairwise tail estimate and the bound on the expected gap.

\subsection{Pairwise Tail Bound}\label{sec:2.2fromMGFBound}

\begin{proof}[Proof of Theorem~\ref{thm:pairwise-tail}]
Fix \(u,v\in V\). Denote 
\(
r_{u,v}
=
dR_{\mathrm{eff}}(u,v)+\Re_G+2.
\)

\medskip 

Let \(0\le s\le\frac{1}{6(\Re_G+1)},\) and take \(\theta=s(e_v-e_u).\) Note that \(\theta\) has zero sum and
\(
\|\theta\|_1=2s
\le
\frac{1}{3(\Re_G+1)},
\) thus Proposition~\ref{prop:mgf-bound} may be applied. Moreover,
\[
\ip{\theta}{X(t)}=s(X_v(t)-X_u(t)), \quad \ip{\theta}{\mathcal L^+\theta}
=
s^2R_{\mathrm{eff}}(u,v), \quad \|\theta\|_2^2=2s^2.
\]
Therefore,
\[
\E e^{s(X_v(t)-X_u(t))}
\le
\exp\bigl(4R(\theta)\bigr)
=
\exp\left(
2ds^2R_{\mathrm{eff}}(u,v)
+
2(\Re_G+2)s^2
\right)
=
\exp\left(
2r_{u,v}s^2
\right).
\]
Chernoff's inequality now yields, for every \(0\le s\le\frac{1}{6(\Re_G+1)}\),
\[
\Pr(X_v(t)-X_u(t)\ge x)
\le
\exp\left(
-sx+2r_{u,v}s^2
\right).
\]

\medskip

We distinguish two ranges of \(x\):

\medskip

\emph{Case 1.} Suppose that \( x \le \frac{2r_{u,v}}{3(\Re_G+1)}.\)
We take \( s=\frac{x}{4r_{u,v}}\) (indeed this choice
satisfies \(s\le \frac 1 {6(\Re_G+1)}\)). Substituting this value in the inequality gives
\[
\Pr(X_v(t)-X_u(t)\ge x)
\le
\exp\left(
-\frac{x^2}{8r_{u,v}}
\right).
\]

\emph{Case 2.} Suppose that \( x > \frac{2r_{u,v}}{3(\Re_G+1)}.
\)
We take \( s=\frac{1}{6(\Re_G+1)},\) and notice
\[
-sx+2r_{u,v}s^2
=
-\frac{x}{6(\Re_G+1)}
+
\frac{r_{u,v}}{18(\Re_G+1)^2}<-\frac{x}{6(\Re_G+1)}
+ \frac{x}{12(\Re_G+1)}.
\]
Therefore
\[
\Pr(X_v(t)-X_u(t)\ge x)
\le
\exp\left(
-\frac{x}{12(\Re_G+1)}
\right).
\]
Applying exactly the same argument to \(X_u(t)-X_v(t)\) symmetrically, and then combining the two estimates, gives
\[
\Pr\left(
|X_v(t)-X_u(t)|\ge x
\right)
\le
2\exp\left(
-c\min\left\{
\frac{x^2}{r_{u,v}},
\frac{x}{\Re_G+1}
\right\}
\right)
\]
for a universal constant \(c>0\). This is precisely the asserted estimate.
\end{proof}

\subsection{Generic Chaining}

We use the following consequence of Dirksen's mixed-tail generic-chaining theorem~\cite[Theorem~3.5]{DirksenGenericChaining}.

\begin{lemma}
\label{lem:mixed-generic-chaining}
Let \((Y_v)_{v\in V}\) be a finite real-valued process, and let \(r_1,r_2\) be semimetrics on \(V\). Suppose that, for every distinct \(u,v\in V\) and every \(z\ge0\),
\[
\Pr\left(
|Y_v-Y_u|
\ge
\sqrt z\,r_2(u,v)+zr_1(u,v)
\right)
\le
2e^{-z}.
\]
Then, for every fixed \(o\in V\),
\[
\E\max_{v\in V}|Y_v-Y_o|
\le
C\left(
\gamma_2(V,r_2)+\gamma_1(V,r_1)
\right),
\]
where \(C\) is a universal constant.
\end{lemma}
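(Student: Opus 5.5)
This lemma is a direct specialization of Dirksen's Theorem~3.5 in~\cite{DirksenGenericChaining}, so the plan is to check that our hypotheses match his and to translate his conclusion into the form stated. Dirksen's theorem assumes a process $(Y_v)$ on a set with two semimetrics $r_1,r_2$ such that $\Pr(|Y_v-Y_u|\ge \sqrt z\,r_2(u,v)+z\,r_1(u,v))\le 2e^{-z}$ for all $z\ge0$ and all $u,v$. He then bounds $\E\sup_{v}|Y_v-Y_o|$, and more precisely its tail, by a universal multiple of $\gamma_2(V,r_2)+\gamma_1(V,r_1)$ plus the $\Delta$-terms $\sqrt z\operatorname{diam}(V,r_2)+z\operatorname{diam}(V,r_1)$.

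\textbf{Step 1: match the hypothesis.} Our hypothesis is exactly Dirksen's mixed-tail increment condition. It is needed only for distinct $u,v$, because the case $u=v$ is trivial.

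\textbf{Step 2: turn the tail bound into an expectation bound.} Dirksen's statement gives, for every $z\ge1$,
\[
\Pr\Bigl(\max_v|Y_v-Y_o|\ge C\bigl(\gamma_2(V,r_2)+\gamma_1(V,r_1)+\sqrt z\,\Delta_2+z\,\Delta_1\bigr)\Bigr)\le e^{-z},
\]
where $\Delta_i=\operatorname{diam}(V,r_i)$. Integrating this tail over $z$ gives
\[
\E\max_v|Y_v-Y_o|\le C\bigl(\gamma_2+\gamma_1+\Delta_2+\Delta_1\bigr).
\]
The diameters are then absorbed using $\Delta_\alpha\le\gamma_\alpha$. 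This holds because the $k=0$ term in the definition of $\gamma_\alpha$ uses $\mathcal A_0=\{V\}$ and so equals $\operatorname{diam}(V,r)$. That yields the stated bound.

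\textbf{Alternative: a self-contained chaining argument.} If one prefers not to rely on the precise form of Dirksen's statement, the plan is as follows.
\begin{enumerate}[label=(\roman*)]
\item Take admissible sequences $\mathcal A,\mathcal B$ that are nearly optimal for $\gamma_2(V,r_2)$ and $\gamma_1(V,r_1)$, and form the common refinement $\mathcal C_k=\mathcal A_{k-1}\wedge\mathcal B_{k-1}$. It has $|\mathcal C_k|\le 2^{2^{k}}$, and the index shift costs only constant factors, since $2^{k/\alpha}\le 2\cdot2^{(k-1)/\alpha}$.
\item Choose a point $\pi_k(v)$ in each cell, and write $Y_v-Y_o$ as the telescoping chain $\sum_k (Y_{\pi_k(v)}-Y_{\pi_{k-1}(v)})$.
\item Apply the hypothesis to each link with $z=2^k+\log$-type slack $u2^k$, and take a union bound over the at most $2^{2^{k+1}}$ links at level $k$.
\item On the resulting good event, sum the increments $\sqrt{u2^k}\,r_2+u2^k\,r_1$ along the chain. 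Since $\operatorname{diam}$ of the level-$(k-1)$ cell controls both $r_2$ and $r_1$ of each link, this bounds the sum by $C(\sqrt u\,\gamma_2+u\,\gamma_1)$ with failure probability $\lesssim e^{-u}$ for $u\ge1$.
\item Integrate in $u$.
\end{enumerate}

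The main obstacle in this alternative route is the bookkeeping in step (iii): making $\sum_k 2^{2^{k+1}}e^{-u2^k}$ summable and small for $u\ge$ a constant. This works once $u\ge4$, say, and the constant range of $u$ is covered trivially since probabilities are at most $1$.
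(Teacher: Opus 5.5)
Your proposal is correct and follows essentially the paper's route: invoke Dirksen's Theorem~3.5, then absorb the resulting diameter terms into $\gamma_2(V,r_2)+\gamma_1(V,r_1)$ using the level-$0$ bound $\operatorname{diam}(V,r)\le\gamma_\alpha(V,r)$. The only difference is cosmetic: you integrate the tail form of that theorem (with $\sqrt z\,\Delta_2+z\,\Delta_1$), while the paper uses the $p=1$ moment form and bounds the extra term $2\max_{v}\E|Y_v-Y_o|$ by $C(\Delta_2+\Delta_1)$ directly from the increment hypothesis; your self-contained chaining sketch is also sound but not needed.
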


\begin{proof}
By~\cite[Theorem~3.5]{DirksenGenericChaining}, applied with \(p=1\),
\[
\E\max_{v\in V}|Y_v-Y_o|
\le
C\left(
\gamma_2(V,r_2)+\gamma_1(V,r_1)
\right)
+
2\max_{v\in V}\E|Y_v-Y_o|.
\]
We show that the last term is already controlled by the two \(\gamma\)-functionals.
Fix \(v\in V\). If \(z\ge1\), then \(\sqrt z\le z\), and hence
\[
\sqrt z\,r_2(v,o)+zr_1(v,o)
\le
z(r_2(v,o)+r_1(v,o)).
\]
Consequently,
\[
\Pr\left(
|Y_v-Y_o|
\ge
z(r_2(v,o)+r_1(v,o))
\right)
\le
2e^{-z},
\qquad z\ge1.
\]
Thus,
\[
\begin{aligned}
\E|Y_v-Y_o|
&=
\int_0^\infty
\Pr\left(
|Y_v-Y_o|\ge x
\right)\,dx\\
&\le
(r_2(v,o)+r_1(v,o))
+
\int_{r_2(v,o)+r_1(v,o)}^\infty
\Pr\left(
|Y_v-Y_o|\ge x
\right)\,dx\\
&\le
(r_2(v,o)+r_1(v,o))
+
(r_2(v,o)+r_1(v,o))\int_1^\infty2e^{-z}\,dz\\
&\le
C(r_2(v,o)+r_1(v,o)).
\end{aligned}
\]
Taking the maximum over \(v\) gives
\[
\max_{v\in V}\E|Y_v-Y_o|
\le
C\left(
\operatorname{diam}(V,r_2)
+
\operatorname{diam}(V,r_1)
\right).
\]
Every admissible sequence satisfies \(\mathcal A_0=\{V\}\), thus for every \(v\in V\), the level \(k=0\) term in the defining sum is exactly \(\operatorname{diam}(V,r).\)
All the remaining terms are nonnegative, and therefore
\[
\operatorname{diam}(V,r)
\le
\gamma_\alpha(V,r)
\]
for every \(\alpha\ge1\). It follows that
\[
\max_{v\in V}\E|Y_v-Y_o|
\le
C\left(
\gamma_2(V,r_2)+\gamma_1(V,r_1)
\right).
\]
This proves the lemma.
\end{proof}

\subsection{Expected Gap Bound}\label{sec:2.3from2.2}

\begin{proof}[Proof of Theorem~\ref{thm:general-upper}]
By Theorem~\ref{thm:pairwise-tail}, after changing the universal constants, the hypothesis of Lemma~\ref{lem:mixed-generic-chaining} holds with
\[
r_2(u,v)
=
C\left(
\rho(u,v)
+
\sqrt{\Re_G+2}\,\mathbf 1_{\{u\ne v\}}
\right)
\]
and
\[
r_1(u,v)
=
C(\Re_G+1)\mathbf 1_{\{u\ne v\}}.
\]
To see this explicitly, set again
\( r_{u,v} = dR_{\mathrm{eff}}(u,v)+\Re_G+2\), and recall that Theorem~\ref{thm:pairwise-tail} gives
\[
\Pr\left(
|X_v(t)-X_u(t)|\ge x
\right)
\le
2\exp\left(
-c\min\left\{
\frac{x^2}{r_{u,v}},
\frac{x}{\Re_G+1}
\right\}
\right).
\]
Choose a sufficiently large universal constant \(C_0\). For
\[
x
=
C_0\left(
\sqrt{zr_{u,v}}
+
z(\Re_G+1)
\right),
\]
we have both
\[
\frac{x^2}{r_{u,v}}
\ge
C_0^2z, \qquad \text{and} \qquad
\frac{x}{\Re_G+1}
\ge
C_0z.
\]

Thus, we may choose \(C_0\) sufficiently large, depending only on the universal constant \(c\), such that \(\Pr\left(|X_v(t)-X_u(t)|\ge x \right) \le 2e^{-z}.\) 

Also, we notice that \(\sqrt{r_{u,v}}\le \rho(u,v)+\sqrt{\Re_G+2}\,\mathbf 1_{\{u\ne v\}},\) which gives the displayed choice for \(r_2\).

\medskip

We now estimate their chaining functionals. Let \(k_0\) be the smallest integer such that
\(2^{2^{k_0}}\ge n.\) Consider the admissible sequence that uses the trivial partition \(\mathcal A_k = \{V\}\) for \(k<k_0\) and the partition into singletons \(\mathcal A _k = \{\{v\}:v\in V\}\) for \(k\ge k_0\). Consider the discrete metric $\mathbf 1_{\{u\neq v\}}(u,v)$, and notice that every singleton set has \(\mathbf 1_{\{u\neq v\}}\)-diameter equal to zero, and every non singleton set has diameter one. Therefore,
\[
\gamma_1(V,\mathbf 1_{\{u\neq v\}})
\le
\sum_{k=0}^{k_0-1}2^k
\le
C\log(2n),
\]
and
\[
\gamma_2(V,\mathbf 1_{\{u\neq v\}})
\le
\sum_{k=0}^{k_0-1}2^{k/2}
\le
C\sqrt{\log(2n)}.
\]
Using homogeneity of the \(\gamma\)-functionals and their subadditivity under sums of semimetrics, we obtain, up to a universal multiplicative constant,
\[
\gamma_1(V,r_1)
\le
C(\Re_G+1)\log(2n)
\]
and
\[
\gamma_2(V,r_2)
\le
C\left(
\gamma_2(V,\rho)
+
\sqrt{(\Re_G+2)\log(2n)}
\right).
\]
Therefore, for every fixed \(o\in V\), Lemma~\ref{lem:mixed-generic-chaining} gives
\[
\E\max_{v\in V}|X_v(t)-X_o(t)|
\le
C\left(
\gamma_2(V,\rho)
+
(\Re_G+1)\log(2n)
\right).
\]
Since \(
\Gap(t)
\le
2\max_{v\in V}|X_v(t)-X_o(t)|,
\) the first assertion follows.

\medskip

For the resistance-diameter estimate, apply the same admissible partition construction directly to the metric \(\rho\). It gives the elementary cardinality bound
\[
\gamma_2(V,\rho)
\le
C\operatorname{diam}(V,\rho)\sqrt{\log(2n)}.
\]
By the definition of \(\rho\),
\[
\operatorname{diam}(V,\rho)
=
\sqrt{dR_{\max}(G)}.
\]
Substituting this into the first assertion gives
\[
\E[\Gap(t)]
\le
C\left(
\sqrt{dR_{\max}(G)\log(2n)}
+
(\Re_G+1)\log(2n)
\right),
\]
as claimed.
\end{proof}

\subsection{Proof of the Moment Generating Function Bound}\label{sec:MGFBound}

We begin with an auxiliary Lemma.

\begin{lemma}
\label{lem:voltage-bound}
Let \(\theta\in \mathbf 1^{\perp}\), and set \(\psi=\mathcal L^+\theta.\)
Then, for every edge \(\{u,v\}\in E\),
\[
|\psi_u-\psi_v|
\le
\frac{\|\theta\|_1}{2}R_{\mathrm{eff}}(u,v)
\]
\end{lemma}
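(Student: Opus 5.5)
Write the voltage difference as a linear functional of \(\theta\) and use the triangle inequality. Set \(b=e_u-e_v\). By symmetry of \(\mathcal L^+\),
\[
\psi_u-\psi_v=\ip{b}{\mathcal L^+\theta}=\ip{\mathcal L^+ b}{\theta}=\sum_{i\in V}\theta_i\,\phi_i,
\qquad \phi:=\mathcal L^+ b .
\]
Here \(\phi\) is the potential when a unit current is injected at \(u\) and extracted at \(v\), normalized to have zero sum.

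Because \(\theta\in\1^\perp\), we may shift \(\phi\) by any constant \(a\) without changing the sum:
\[
\psi_u-\psi_v=\sum_i\theta_i(\phi_i-a).
\]
This gives \(|\psi_u-\psi_v|\le\|\theta\|_1\max_i|\phi_i-a|\). Choose \(a\) to be the midpoint of the range of \(\phi\), that is, \(a=\tfrac12(\max_i\phi_i+\min_i\phi_i)\). Then
\[
\max_i|\phi_i-a|=\tfrac12\bigl(\max_i\phi_i-\min_i\phi_i\bigr).
\]

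It remains to identify the range of \(\phi\) with \(R_{\mathrm{eff}}(u,v)\). The vector \(\phi\) satisfies \(\mathcal L\phi=b\) (using \(\mathcal L\mathcal L^+=\) projection onto \(\1^\perp\) and \(b\in\1^\perp\)). Hence \(\phi\) is harmonic at every vertex other than \(u\) and \(v\). At \(u\) we have \((\mathcal L\phi)_u=1>0\), so \(\phi_u\) exceeds the average of its neighbours. At \(v\) we have \((\mathcal L\phi)_v=-1<0\), so \(\phi_v\) is below the average of its neighbours.

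By the maximum principle on the connected graph, \(\max\phi\) is attained at \(u\) and \(\min\phi\) at \(v\). Concretely, if the maximum over \(V\) were attained at a vertex \(w\ne u\), then at \(w\) the value is at most the neighbour average (harmonic or subharmonic there). So all neighbours of \(w\) share the maximal value, and propagating along a path to \(u\) contradicts \((\mathcal L\phi)_u>0\). The same argument applies to the minimum at \(v\).

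Therefore
\[
\max_i\phi_i-\min_i\phi_i=\phi_u-\phi_v=\ip{b}{\mathcal L^+b}=R_{\mathrm{eff}}(u,v).
\]
Combining the pieces gives \(|\psi_u-\psi_v|\le\tfrac{\|\theta\|_1}{2}R_{\mathrm{eff}}(u,v)\).

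The only step that needs care is this maximum principle argument, and it is standard. Note that the proof never uses the hypothesis that \(\{u,v\}\) is an edge, so the bound holds for arbitrary pairs \(u,v\).
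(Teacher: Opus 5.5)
Your proof is correct and follows the paper's route: you write $\psi_u-\psi_v=\ip{\mathcal L^+(e_u-e_v)}{\theta}$, identify the oscillation of $\mathcal L^+(e_u-e_v)$ with $R_{\mathrm{eff}}(u,v)$ via the maximum principle, and bound the pairing with a zero-sum $\theta$ by $\tfrac{\|\theta\|_1}{2}$ times that oscillation. Your midpoint shift $a$ plays the role of the paper's split $\theta=\theta^+-\theta^-$ into two parts of mass $\|\theta\|_1/2$, each pairing with the potential as a scaled convex combination of its values.

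One fix is needed in your maximum-principle sketch. The maximum need not be attained only at $u$: if $u$ is a cut vertex, then $\phi\equiv\phi_u$ on every component of $G-u$ not containing $v$. Also, reaching $u$ along a path does not contradict $(\mathcal L\phi)_u>0$. Instead, assume $\phi_u<\max\phi$ and note that $v$ is never a maximizer, since $(\mathcal L\phi)_v<0$. The maximizing set then consists of harmonic vertices, so it is closed under taking neighbours, hence equals $V$ by connectivity, which is a contradiction.
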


\begin{proof}
Fix an orientation of the edge \(\{u,v\}\), and define
\[
g
=
\mathcal L^+(e_u-e_v).
\]
The identity \(\operatorname{osc}(g)=R_{\mathrm{eff}}(u,v)\) is standard, nevertheless, we include a short derivation for completeness.
 Because \(\mathcal L\) is real and symmetric, its Moore--Penrose pseudoinverse \(\mathcal L^+\) is symmetric as well. Moreover, since \(G\) is connected, \(\mathcal L\mathcal L^+\) is the orthogonal projection onto \(\1^\perp\). Consequently,
\[
\mathcal Lg
=
\mathcal L\mathcal L^+(e_u-e_v)
=
e_u-e_v.
\]
Thus, at every vertex $i \in V$ other than $u$ and $v$ we get, \(dg_i - \sum_{j\sim i} g_j = (\mathcal L g)_i = 0\), so \(g\) is harmonic at every vertex other than \(u\) and \(v\). 
By the discrete maximum principle (see \cite[Chapter 2]{LyonsPeres} for example), the maximum and minimum of \(g\) are attained at \(u\) and \(v\), that is,
\[
\max_i g_i=g_u,
\qquad
\min_i g_i=g_v.
\]
Therefore,
\[
\begin{aligned}
\operatorname{osc}(g)
&=
g_u-g_v\\
&=
(e_u-e_v)^{\mathsf T}
\mathcal L^+
(e_u-e_v)\\
&=
R_{\mathrm{eff}}(u,v).
\end{aligned}
\]
Write \(\theta=\theta^+-\theta^-\) for the positive and negative parts of \(\theta\). Using the symmetry of \(\mathcal L^+\), we obtain
\[
|\psi_u-\psi_v|
=
|\ip{e_u-e_v}{\mathcal L^+\theta}|=
|\ip{\mathcal L^+(e_u-e_v)}{\theta}|=
|\ip{g}{\theta}|=
\]
\[ 
\left|\sum_i\theta_i^+g_i-\sum_i\theta_i^-g_i\right| \underset{(*)}{\le}
\frac{\|\theta\|_1}{2}\operatorname{osc}(g)=
 \frac{\|\theta\|_1}{2}R_{\mathrm{eff}}(u,v).
\]
Where the inequality marked $(*)$ follows by the fact that both sums are convex combination of entries of $g$ multiplied by a scale of $\frac{\|\theta\|_1}{2}$, and thus lie in $[\min_i g_i , \max_i g_i].$
\end{proof}

\begin{proposition}
\label{prop:one-step}
Let \(G\) be a connected \(d\)-regular graph. For every zero-sum vector \(\theta\in\ \mathbf 1 ^\perp \) satisfying \(\|\theta\|_1 \le \frac{1}{3(\Re_G+1)}\)
and every \(w\in[0,1]\),
\[
\frac1{|E|}
\sum_{e\in E}
\exp\left(
 w\ip{\theta}{c_e}
+
4\bigl(R(A_e\theta)-R(\theta)\bigr)
\right)
\le1.
\]
\end{proposition}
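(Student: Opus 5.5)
The plan is to expand the exponent edge by edge, bound it by an explicit expression whose linear part averages to something negative, and then linearize the exponential using $e^y\le 1+y+y^2$ for $|y|\le1$. Throughout, fix an edge $e=\{u,v\}$ and write $\delta_e=e_u-e_v$, $a_e=\theta_u-\theta_v$, $\psi=\mathcal L^+\theta$, $\Delta\psi_e=\psi_u-\psi_v$, $R_e=R_{\mathrm{eff}}(u,v)$ and $s_e=\frac{\theta_u+\theta_v}{2}$.

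\textbf{Step 1 (exact one-step change).} Since $A_e\theta=\theta-\frac{a_e}{2}\delta_e$, a direct computation gives $\|A_e\theta\|_2^2-\|\theta\|_2^2=-\frac{a_e^2}{2}$. Using $\ip{\delta_e}{\mathcal L^+\delta_e}=R_e$, it also gives
\[
\ip{A_e\theta}{\mathcal L^+A_e\theta}-\ip{\theta}{\mathcal L^+\theta}=-a_e\Delta\psi_e+\frac{a_e^2}{4}R_e .
\]
Because $dR_e\le\Re_G$, these combine to
\[
4\bigl(R(A_e\theta)-R(\theta)\bigr)=-2da_e\Delta\psi_e+\frac{dR_e}{2}a_e^2-\frac{\Re_G+2}{2}a_e^2\le -2da_e\Delta\psi_e-a_e^2 .
\]
Since $\theta$ has zero sum, $\ip{\theta}{c_e}=s_e$. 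So it suffices to show that the edge average of $e^{y_e}$ is at most $1$, where $y_e:=ws_e-2da_e\Delta\psi_e-a_e^2$.

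\textbf{Step 2 (edge sums).} We have $\sum_e s_e=\frac d2\sum_i\theta_i=0$. We also have $\sum_e a_e\Delta\psi_e=\ip{\theta}{\mathcal L\psi}=\|\theta\|_2^2$, because $\mathcal L\mathcal L^+$ is the projection onto $\1^\perp$. Finally $\sum_e a_e^2=\ip{\theta}{\mathcal L\theta}$. With $|E|=nd/2$, the average of $y_e$ is therefore
\[
-\frac{4\|\theta\|_2^2}{n}-\frac1{|E|}\sum_e a_e^2 .
\]

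\textbf{Step 3 (size bounds).} Write $\eta=\|\theta\|_1\le\frac1{3(\Re_G+1)}$. Then $|s_e|\le\eta/2$ and $|a_e|\le\eta$. By Lemma~\ref{lem:voltage-bound}, $|2da_e\Delta\psi_e|\le dR_e\,\eta|a_e|\le \Re_G\eta|a_e|\le|a_e|/3$. Also $a_e^2\le\eta|a_e|\le|a_e|/3$. Hence $|y_e|\le \eta/2+\frac{2}{3}|a_e|\le1$, so $e^{y_e}\le1+y_e+y_e^2$.

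\textbf{Step 4 (quadratic term).} This is the delicate point, since the factor in front must not exceed what the negative linear terms can absorb. I will split $y_e$ as $ws_e$ plus a remainder of absolute value at most $\frac23|a_e|$. This gives $y_e^2\le 2s_e^2+\frac89a_e^2$. Since $\sum_e s_e^2\le\sum_e\frac{\theta_u^2+\theta_v^2}{2}=\frac d2\|\theta\|_2^2$, the average of $2s_e^2$ is at most $\frac{2\|\theta\|_2^2}{n}$. Thus
\[
\frac1{|E|}\sum_e y_e^2\le\frac{2\|\theta\|_2^2}{n}+\frac{8}{9|E|}\sum_e a_e^2 ,
\]
which is dominated by the negative average of $y_e$ from Step~2. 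Consequently $\frac1{|E|}\sum_e e^{y_e}\le 1$, which proves Proposition~\ref{prop:one-step}.
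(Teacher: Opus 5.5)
Your proof is correct and follows essentially the same argument as the paper. It uses the same exact one-step expansion of $R(A_e\theta)-R(\theta)$, the identities $\sum_e s_e=0$ and $\sum_e a_e\Delta\psi_e=\|\theta\|_2^2$, Lemma~\ref{lem:voltage-bound} to control $|y_e|$, and $e^y\le 1+y+y^2$ with the quadratic term absorbed by the negative linear average. The only cosmetic differences are two: you replace the exponent by its upper bound $y_e$ before exponentiating, so you never bound $|q_e|$ itself; and you drop the $-\tfrac14\ip{\theta}{\mathcal L\theta}$ term in $\sum_e s_e^2$, which your constants comfortably afford.
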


\begin{proof}
We again denote \(
\psi=\mathcal L^+\theta.\)
Fix an orientation of every edge \(e=\{u,v\}\) and define
\[
\delta_e=\theta_u-\theta_v,
\qquad
z_e=\ip{\theta}{c_e}, \qquad q_e
=
R(A_e\theta)-R(\theta).
\]
Since \(\theta\) has zero sum,
\[
z_e
=
\frac{\theta_u+\theta_v}{2}.
\]
We compute now \(q_e\). Since
\(
A_e\theta
=
\theta-\frac{\delta_e}{2}(e_u-e_v),
\)
the effective resistance part of \(R\) changes by
\[
\begin{aligned}
&\frac d2
\left(
\ip{A_e\theta}{\mathcal L^+A_e\theta}
-
\ip{\theta}{\mathcal L^+\theta}
\right)\\
&\qquad=
-\frac d2\delta_e
\ip{e_u-e_v}{\mathcal L^+\theta}
+
\frac d8\delta_e^2
\ip{e_u-e_v}{\mathcal L^+(e_u-e_v)}\\
&\qquad=
-\frac d2\delta_e(\psi_u-\psi_v)
+
\frac d8\delta_e^2R_{\mathrm{eff}}(u,v).
\end{aligned}
\]
On the other hand,
\[
\begin{aligned}
\|A_e\theta\|_2^2-\|\theta\|_2^2
&=
\left\|
\theta-\frac{\delta_e}{2}(e_u-e_v)
\right\|_2^2
-
\|\theta\|_2^2\\
&=
-\delta_e\ip{\theta}{e_u-e_v}
+
\frac{\delta_e^2}{4}\|e_u-e_v\|_2^2\\
&=
-\delta_e^2+
\frac12\delta_e^2\\
&=
-\frac12\delta_e^2.
\end{aligned}
\]
Multiplying by the coefficient \((\Re_G+2)/4\) in the definition of \(R\), we obtain
\[
q_e
=
-\frac d2\delta_e(\psi_u-\psi_v)
+
\frac{dR_{\mathrm{eff}}(u,v)-(\Re_G+2)}{8}\delta_e^2 
\le
-\frac d2\delta_e(\psi_u-\psi_v)
-
\frac{1}{4}\delta_e^2.
\]
We next sum the first-order terms over all edges. We observe that
\[
\sum_{e\in E}z_e
=
\frac12
\sum_{\{u,v\}\in E}
(\theta_u+\theta_v)
=
\frac d2\sum_{i\in V}\theta_i
=
0
\]
and also,
\[
\sum_{e\in E}z_e^2
=
\frac14
\sum_{\{u,v\}\in E}
(\theta_u+\theta_v)^2
=
\frac d2\|\theta\|_2^2
-
\frac14\ip{\theta}{\mathcal L\theta}.
\]
To calculate the mixed term in \(q_e\), note that
\(
\sum_{e=\{u,v\}\in E} \delta_{e}(\psi_u-\psi_v)=\ip{\theta}{\mathcal L\psi}=\ip{\theta}{\mathcal L \mathcal L^+ \theta}=\|\theta\|_2^2.\) From the previous bound on $q_e$ we obtain
\[
\sum_{e\in E}q_e
\le
\sum_{e=\{u,v\}\in E} -\frac d2  \delta_{e}(\psi_u-\psi_v)
-
\sum_{e\in E} \frac 1 4\delta_e^2
=
-\frac d2\|\theta\|_2^2
-
\frac14\ip{\theta}{\mathcal L\theta}
=
-
\sum_{e\in E}z_e^2
-
\frac12\ip{\theta}{\mathcal L\theta}.
\]
Since $\theta \in \mathbf 1^\perp$, the negative and positive entries of $\theta$ have equal sum, and in particular $\|\theta^-\|_1=\|\theta^+\|_1 = \frac{\|\theta\|_1}{2}$. Thus \(|\delta_e| \le \|\theta\|_1\) for every edge \(e\), and by Lemma~\ref{lem:voltage-bound},
\(
|\psi_u-\psi_v|\le
\frac{\|\theta\|_1}{2}R_{\mathrm{eff}}(u,v)
\le
\frac{\Re_G\|\theta\|_1}{2d}.
\)
Using again the previous calculation of $q_e$ we see
\[
\begin{aligned}
|q_e|
&\le
\frac d2|\delta_e|\,|\psi_u-\psi_v|
+
\frac{(\Re_G+2)-dR_{\mathrm{eff}}(u,v)}{8}\delta_e^2\\
&\le
\frac{\Re_G\|\theta\|_1}{4}|\delta_e|
+
\frac{\Re_G+2}{8}\delta_e^2\\
&\le
\frac{\Re_G\|\theta\|_1}{4}|\delta_e|
+
\frac{\Re_G+2}{8}\|\theta\|_1|\delta_e|\\
&=
\frac{3\Re_G+2}{8}
\|\theta\|_1|\delta_e|.
\end{aligned}
\]
Then
\[
\sum_{e\in E}q_e^2
\le
\left(\frac{3\Re_G+2}{8}\right)^2
\|\theta\|^2_1
\sum_{e\in E}\delta_e^2
=
\left(\frac{3\Re_G+2}{8}\right)^2
\|\theta\|^2_1
\ip{\theta}{\mathcal L\theta}.
\]

We are now ready for the exponential estimate. Denote \(y_e = wz_e+4q_e.\) Notice
\[
\sum_{e\in E}y_e
=
 w\sum_{e\in E}z_e
+
4\sum_{e\in E}q_e
\le
-4\sum_{e\in E}z_e^2
-
2\ip{\theta}{\mathcal L\theta}.
\]
Since \((x+y)^2\le2x^2+2y^2\) we also get,
\[
\sum_{e\in E}y_e^2
\le
2w^2\sum_{e\in E}z_e^2
+
32\sum_{e\in E}q_e^2
\le
2\sum_{e\in E}z_e^2
+32
\left(\frac{3\Re_G+2}{8}\right)^2
\|\theta\|^2_1
\ip{\theta}{\mathcal L\theta}.
\]
Since \(\|\theta\|_1 \le \frac 1{3(\Re_G+1)}\) we have 
\(
32 \left(\frac{3\Re_G+2}{8}\right)^2
\|\theta\|^2_1\le \frac{ (\Re_G+\frac23)^2}{2(\Re_G+1)^2}\le \frac12
\)
and consequently,
\[
\sum_{e\in E}(y_e+y_e^2)
\le
(-4+2)\sum_{e\in E}z_e^2
+
\left(
-2+\frac{1}{2}
\right)
\ip{\theta}{\mathcal L\theta}
\le
0.
\]
Note that
\[
|y_e|
\le
|z_e|+4|q_e|
\le
\|\theta\|_1+\frac{3\Re_G+2}{2} \|\theta\|_1^2
\le
\frac32 \|\theta\|_1\le
\frac12.
\]
For \(|y|\le \frac12\), the inequality \( e^y\le1+y+y^2 \) holds, and by applying it to every \(y_e\) and summing over the edges gives
\[
\begin{aligned}
\sum_{e\in E}e^{y_e}
&\le
\sum_{e\in E}(1+y_e+y_e^2)\\
&=
|E|+
\sum_{e\in E}(y_e+y_e^2)\\
&\le
|E|.
\end{aligned}
\]
Dividing by \(|E|\) proves the proposition.
\end{proof}

\medskip

We are now ready to prove the moment generating function bound.

\medskip

\begin{proof}[Proof of Proposition~\ref{prop:mgf-bound}]
We proceed by induction on \(t\). At time \(0\), \(X(0)=0\), and the claim follows trivially
\(
M_0(\theta)=1\le\exp\bigl(4R(\theta)\bigr).
\)

\medskip

Assume the result holds at time \(t\). For every edge \(e=\{u,v\}\), the map \(A_e\) preserves the sum of the coordinates and also contracts the \(\ell_1\) norm. Indeed, it changes only the coordinates at \(u\) and \(v\), and
\[
|(A_e\theta)_u|+|(A_e\theta)_v|
=
2\left|
\frac{\theta_u+\theta_v}{2}
\right|
=
|\theta_u+\theta_v|
\le
|\theta_u|+|\theta_v|.
\]
Thus, whenever \(\theta\) is zero-sum and \( \|\theta\|_1 \le \frac{1}{3(\Re_G+1)}, \) the vector \(A_e\theta\) also satisfies both conditions. By Lemma~\ref{lem:mgf-recursion} and the induction hypothesis,
\[
\begin{aligned}
M_{t+1}(\theta)
&=
\frac1{|E|}
\sum_{e\in E}
\exp\bigl(w_t\ip{\theta}{c_e}\bigr)
M_t(A_e\theta)\\
&\le
\frac1{|E|}
\sum_{e\in E}
\exp\left(
 w_t\ip{\theta}{c_e}
+
4R(A_e\theta)
\right)\\
&=
\exp\bigl(4R(\theta)\bigr)
\frac1{|E|}
\sum_{e\in E}
\exp\left(
 w_t\ip{\theta}{c_e}
+
4\bigl(R(A_e\theta)-R(\theta)\bigr)
\right)\\
&\le
\exp\bigl(4R(\theta)\bigr),
\end{aligned}
\]
where the final inequality follows from Proposition~\ref{prop:one-step}.
\end{proof}

\section{Additional Proofs}
\label{sec:applications}

\subsection{The Cycle and the Torus}\label{sec:cycleandTorus}

\begin{proof}[Proof of Corollary~\ref{cor:cycle-upper}]
Let \(G=C_n\) with $V=\mathbb Z / n\mathbb Z$. Then \(d=2\), and the cycle is edge-transitive, thus every edge has the same effective resistance. Hence, as in Remark~\ref{rem:sameER} we have
\( \Re_{C_n} = \frac{2(n-1)}{n}\).

Fix distinct vertices \(i,j\in C_n\). Note that
\[
R_{\mathrm{eff}}(i,j)
=
\frac{|i-j|(n-|i-j|)}{n}.
\] 
In particular,
\[
\rho(i,j)  =
\sqrt{2R_{\mathrm{eff}}(i,j)}
\le
\sqrt{2\operatorname{dist}_{C_n}(i,j)}.
\]
We construct an admissible sequence of partitions by consecutive arcs of the cycle. Take \(\mathcal A_0 = \{V\}\). For \(k>0\), put \(m_k=\min\{n,2^{2^k}\},\) and partition the vertices into at most \(m_k\) consecutive arcs along the cycle of \say{almost} even size (i.e. whose size differ by at most one). denote this admissible partition by \(\mathcal A_k\). 

If \(A\in\mathcal A_k\) is not a singleton, every two vertices in \(A\) are joined inside that arc by a path of length at most
\( \left\lceil\frac{n}{m_k}\right\rceil. \) Consequently,
\[
\operatorname{diam}(A,\rho)
\le
\sqrt{
2\left\lceil\frac{n}{m_k}\right\rceil
}.
\]
Once \(m_k=n\), all sets may be chosen to be singletons and their diameter is zero. Thus, for every vertex \(i\),
\[
\sum_{k\ge0}
2^{k/2}
\operatorname{diam}(A_k(i),\rho)
\le
C\sqrt n
\sum_{k\ge0}
2^{k/2}2^{-2^{k-1}}
\le
C'\sqrt n.
\]
In particular,
\(\gamma_2(C_n,\rho) \le C\sqrt n,\) thus by Theorem~\ref{thm:general-upper},
\[
\E[\Gap(t)]
=
O\left(
\sqrt n+\log(2n)
\right)
=
O(\sqrt n)
\]
for every \(n\ge3\), every \(t\ge0\), and every deterministic arrival sequence \(0\le w_t\le1\). This proves the corollary.
\end{proof}

\begin{proof}[Proof of Corollary~\ref{cor:torus}]
Let
\(G=\mathbb T_m^2=\mathbb Z/m\mathbb Z \, \times \mathbb Z/m\mathbb Z,
\qquad
n=m^2.\)
Again, by remark~\ref{rem:sameER} we have \( \Re_{\mathbb T_m^2}=\frac{2(n-1)}{n}.\)
Chandra, Raghavan, Ruzzo, Smolensky, and Tiwari~\cite[Theorem~6.1]{ChandraResistance} show that
\[
R_{\max}(G)
=
O(\log n).
\]
Applying Theorem~\ref{thm:general-upper} and using \(\Re_G<2\), we obtain
\[
\E[\Gap(t)]
=
O(\log(n))
\]
uniformly in \(t\) and in all deterministic arrival sequences \(0\le w_t\le1\). 
\end{proof}

\subsection{Bounds for Regular Graphs}\label{sec:BoundsRegGraphs}

\begin{proof}[Proof of Theorem~\ref{thm:bounded-degree-regular}]
Let \(t_{\mathrm{cov}}(G)\) denote the cover time of the simple random walk on \(G\) (see~\cite{LevinPeres} for example). Ding, Lee, and Peres~\cite[Theorem~1.2]{DingLeePeres} show that
\[
\gamma_2
\left(
V,\sqrt{R_{\mathrm{eff}}}
\right)
\le
C
\sqrt{
\frac{t_{\mathrm{cov}}(G)}{|E|}
}.
\]
In particular,
\[
\begin{aligned}
\gamma_2(V,\rho)
&=
\gamma_2
\left(
V,\sqrt{dR_{\mathrm{eff}}}
\right)\\
&=
\sqrt d\,
\gamma_2
\left(
V,\sqrt{R_{\mathrm{eff}}}
\right)\\
&\le
C
\sqrt{
\frac{d\,t_{\mathrm{cov}}(G)}
{|E|}
}.
\end{aligned}
\]
Because \(G\) is \(d\)-regular,
\( \gamma_2(V,\rho) \le C \sqrt{
\frac{t_{\mathrm{cov}}(G)}{n}}.\)
Kahn, Linial, Nisan, and Saks~\cite{KahnLinialNisanSaks} proved that the cover time of every \(n\)-vertex regular graph satisfies
\[
t_{\mathrm{cov}}(G)
\le
Cn^2.
\]
It follows that \( \gamma_2(V,\rho) \le C\sqrt n.\)
Also, recall from Remark~\ref{rem:sameER} that \(\Re_G\le d.\)
Substituting into Theorem~\ref{thm:general-upper} yields
\[
\E[\Gap(t)]
=
O\left(
\gamma_2(V,\rho)
+
(\Re_G+1)\log(2n)
\right)
=
O\left(
\sqrt n
+
(d+1)\log(2n)
\right)
=
O_d(\sqrt n).
\]
\end{proof}

\medskip

\begin{proof}[Proof of Theorem~\ref{thm:main-lower}]
We first consider unit arrivals, \(w_t\equiv1\). The proof is identical to the \say{folklore} coupon-collector lower-bound argument of Bansal and Feldheim~\cite{BansalFeldheim}. Namely, for a suitable time \(t_n=\Theta(n\log n)\), with probability $\Omega(1)$, there exists a vertex that has not been incident to any selected edge during the first \(t_n\) rounds. Such a vertex still has load \(0\), whereas the average load is \(t_n/n=\Theta(\log n)\), and hence \(\Gap(t_n)=\Omega(\log n)\) on this event.

The same argument applies whenever the arrivals satisfy \(w_t\ge\delta>0\). Indeed, on the event that some vertex remains untouched, that vertex still has load \(0\), while the total load at time \(t_n\) is at least \(\delta t_n\), and therefore
\(
\sup_{t\ge0}\E[\Gap(t)]\ge \E[\Gap(t_n)] \ge\delta t_n/n=\Omega_\delta(\log n).
\)
\end{proof}

\subsection{Sharpness of Bound for Cycle}\label{sec:LBoundCycle}

\begin{proof}[Proof of Theorem~\ref{thm:cycle-lower}]
We divide the proof into two cases:
\\
\\
\textbf{Unit-load case:} We first consider the case where all the loads introduced in the process are constant, i.e. $w_t = 1$ for all $t\ge 0$. Recall the definition of the \(\lfloor n/2\rfloor\)-hop potential which was introduced in~\cite{AlistarhNadiradzeSabour},
\[
\phi_{\lfloor n/2\rfloor}(t)
=
\sum_{i=0}^{n-1}
\bigl(X_i(t)-X_{i+{\lfloor n/2\rfloor}}(t)\bigr)^2.
\]
The lower-bound argument given in~\cite{AlistarhNadiradzeSabour}, when applied to this case\footnote{The assumption of the authors on the loads $w_t$ is that they are generated randomly by the same distribution $W$ satisfying $\E(W^2)=1$. We may see that their assumption applies when considering the case where $w_t=1$ for all $t \ge 0$}, shows that for all sufficiently large \(t\),
\[
\E[\phi_{\lfloor n/2\rfloor}(t)]\ge c n^2.
\]
By rotational symmetry, the expected squared difference is the same for every such pair. Therefore,  
\[
\E\bigl[(X_0(t)-X_{\lfloor n/2\rfloor}(t))^2\bigr]
=
\frac1n\E[\phi_{\lfloor n/2\rfloor}(t)]
\ge cn.
\]
Theorem~\ref{thm:pairwise-tail} implies, after changing universal constants, that
\[
\Pr(|X_0(t)-X_{\lfloor n/2\rfloor}(t)|\ge u)
\le
C\exp\left(
-c\min\left\{\frac{u^2}{n},u\right\}
\right).
\]
Thus,
\[
\E|X_0(t)-X_{\lfloor n/2\rfloor}(t)|^4
=
4\int_0^\infty u^3\Pr(|X_0(t)-X_{\lfloor n/2\rfloor}(t)|\ge u)\,du
\le
\]
\[
4C\int_0^n u^3e^{-cu^2/n}\,du
+
4C\int_n^\infty u^3e^{-cu}\,du
\le
C'n^2.
\]

We now apply the Paley-Zygmund inequality~\cite{PaleyZygmund} to the nonnegative random
variable \((X_0(t)-X_{\lfloor n/2\rfloor}(t))^2\). Since
\(\E[(X_0(t)-X_{\lfloor n/2\rfloor}(t))^2]\ge cn\) and \(\E[(X_0(t)-X_{\lfloor n/2\rfloor}(t))^4]\le C'n^2\),
\[
\Pr\left(
(X_0(t)-X_{\lfloor n/2\rfloor}(t))^2\ge\frac12\E[(X_0(t)-X_{\lfloor n/2\rfloor}(t))^2]
\right)
\ge
\frac14
\frac{\E[(X_0(t)-X_{\lfloor n/2\rfloor}(t))^2]^2}{\E[(X_0(t)-X_{\lfloor n/2\rfloor}(t))^4]}
\ge \frac{c^2}{4C'}.
\]
Hence
\[
\Pr\left(
|X_0(t)-X_{\lfloor n/2\rfloor}(t)|\ge \sqrt \frac{cn}{2}
\right)
\ge \frac{c^2}{4C'}.
\]
Finally, \(\Gap(t)\ge |X_0(t)-X_{\lfloor n/2\rfloor}(t)|\), and therefore 
\[
\E[\Gap(t)]
\ge \sqrt{\frac{cn}{2}} \Pr\left(\Gap(t)\ge \sqrt{\frac{cn}{2}}\right) \ge
C''\sqrt n.
\]
As wanted.
\\
\\
\textbf{General case.}
We now assume that
\(
\delta \le w_t \le 1
\)
for every \(t \ge 0\). Let \(X^{(w)}(t)\) denote the centered load vector of this process, and let \(X^{(1)}(t)\) denote the centered load vector of the unit-load process.

For a vector \(\theta \in \R^n\), define
\[
F_t^{(w)}(\theta)
=
\E\left[ \ip{\theta}{X^{(w)}(t)} ^2
\right],
\]
and \(F_t^{(1)}(\theta)\) analogously. We claim that \(
F_t^{(w)}(\theta)
\ge
\delta^2 F_t^{(1)}(\theta),
\) for every \(t \ge 0, \ \theta \in \R^n\).

We prove this by induction on \(t\). It is immediate at \(t=0\), since both processes start from the zero configuration. 
Denote $A_j = A_{\{j,j+1\} } , c_j = c_{\{j,j+1\}}$.
By rotational symmetry and since both centered processes have zero sum, we have \(\E[X^{(w)}(t)]=\E[X^{(1)}(t)]=0\). Hence the cross terms vanish, and the update rule gives
\[
F_{t+1}^{(w)}(\theta)
=
\frac{1}{n}\sum_{j=0}^{n-1}
F_t^{(w)}(A_j\theta)
+
\frac{w_t^2}{n}
\sum_{j=0}^{n-1}
\ip{\theta}{c_j}^2 \ge \frac{\delta^2}{n}
\sum_{j=0}^{n-1}
F_t^{(1)}(A_j\theta)
+
\frac{\delta^2}{n}
\sum_{j=0}^{n-1}
\ip{\theta}{c_j}^2 \\
=
\delta^2 F_{t+1}^{(1)}(\theta).
\]

Taking
\(
\theta = e_i-e_{i+\lfloor n/2\rfloor}
\)
gives
\[
\E\left[
\left(
X_i^{(w)}(t)-X_{i+\lfloor n/2\rfloor}^{(w)}(t)
\right)^2
\right]
\ge
\delta^2
\E\left[
\left(
X_i^{(1)}(t)-X_{i+\lfloor n/2\rfloor}^{(1)}(t)
\right)^2
\right].
\]
Summing over \(i\), we conclude that
\(
\E\left[
\phi_{\lfloor n/2\rfloor}^{(w)}(t)
\right]
\ge
\delta^2
\E\left[
\phi_{\lfloor n/2\rfloor}^{(1)}(t)
\right].
\)
Thus, from the unit-load lower bound, for all sufficiently large \(t\),
\[
\E\left[
\phi_{\lfloor n/2\rfloor}^{(w)}(t)
\right]
\ge
c\delta^2 n^2.
\]

Repeating the Paley--Zygmund argument from the unit-load case, using the uniform fourth-moment bound supplied by Theorem~\ref{thm:pairwise-tail}, yields
\[
\E[\Gap(t)]
\ge
c_\delta \sqrt{n}.
\]
This proves the result in the general case. 
\end{proof}

\section{Further Discussion}
The results above suggest several natural extensions of the model. First, it would be interesting to understand to what extent the regularity assumption can be removed. Regularity is used at several points in the argument, most notably in the averaging over edges and in the normalization of the effective-resistance metric. For a general graph, the vertices are sampled with probabilities proportional to their degrees, and the natural centering and the corresponding quadratic form would therefore have to reflect the stationary measure of the underlying random walk. It seems plausible that an analogous pairwise concentration estimate should hold with an appropriately weighted Laplacian and resistance metric.

\medskip

A related extension is to allow a nonuniform choice of the averaging edge. More generally, suppose that each edge \(e\) is selected according to a fixed probability \(p_e\). This may be viewed as assigning conductances to the edges, and suggests replacing the usual graph Laplacian by the corresponding weighted Laplacian. Since effective resistance and generic chaining are naturally defined in this weighted setting, one may expect the same general approach to apply. The main question is whether the one-step moment generating function estimate can be adapted with constants controlled by suitable local parameters of the weighted graph.

\medskip

Another natural question is to determine more precisely which features of the resistance metric govern the gap. Our general upper bound is expressed through the generic-chaining functional
\(\gamma_2(V,\rho)\), while the lower bounds obtained here use different arguments. It would be interesting to identify graph families for which \(\gamma_2(V,\rho)\) gives the correct order of the expected gap, and more generally to understand whether a matching lower bound can be formulated directly in terms of the effective-resistance metric.

\medskip

Finally, while our main results concern the expected gap, the pairwise estimate of Theorem~\ref{thm:pairwise-tail} already provides exponential tail bounds for individual differences. It would be interesting to investigate whether the chaining argument can be strengthened to obtain sharp high-probability bounds for the gap itself, uniformly in time. We leave the development of the extensions above to future work.

\medskip

The upper bound also extends immediately to arbitrary initial configurations. Indeed, by linearity, the process can be decomposed into the sum of the process started from the zero configuration and the stationary averaging process applied to the initial load vector. Since every averaging step can only decrease the gap of the latter,
\[
\E[\Gap(t)]
\le
\Gap(0)+C\sqrt n.
\]
An improved bound that captures the decay of the contribution of the initial configuration would depend on the mixing behavior of the averaging process~\cite{SauerwaldSun}.

\paragraph{Acknowledgments.}
The author thanks Ron Peretz for introducing the problem and for many helpful discussions and valuable comments.


\begin{thebibliography}{99}

\bibitem{AldousLanoue}
D.~Aldous and D.~Lanoue,
A lecture on the averaging process,
\emph{Probability Surveys}
\textbf{9} (2012), 90--102.

\bibitem{AlistarhNadiradzeSabour}
D.~Alistarh, G.~Nadiradze, and A.~Sabour,
Dynamic averaging load balancing on cycles,
\emph{Algorithmica}
\textbf{84} (2022), 1007--1029.

\bibitem{AzarBroderKarlinUpfal}
Y.~Azar, A.~Z.~Broder, A.~R.~Karlin, and E.~Upfal,
Balanced allocations,
\emph{SIAM Journal on Computing}
\textbf{29} (1999), no.~1, 180--200.

\bibitem{BansalFeldheim}
N.~Bansal and O.~N.~Feldheim,
The power of two choices in graphical allocation,
\emph{SIAM Journal on Computing} (2024),
Special Section STOC 2022.

\bibitem{BerenbrinkCzumajStegerVocking}
P.~Berenbrink, A.~Czumaj, A.~Steger, and B.~V\"ocking,
Balanced allocations: The heavily loaded case,
\emph{SIAM Journal on Computing}
\textbf{35} (2006), no.~6, 1350--1385.

\bibitem{BerenbrinkHintzeHosseinpourKaaserRau}
P.~Berenbrink, L.~Hintze, H.~Hosseinpour, D.~Kaaser, and M.~Rau,
Dynamic averaging load balancing on arbitrary graphs,
in \emph{50th International Colloquium on Automata, Languages, and Programming (ICALP 2023)},
Leibniz International Proceedings in Informatics,
vol.~261, Schloss Dagstuhl--Leibniz-Zentrum f\"ur Informatik
(2023), pp.~18:1--18:18.

\bibitem{ChandraResistance}
A.~K.~Chandra, P.~Raghavan, W.~L.~Ruzzo, R.~Smolensky, and P.~Tiwari,
The electrical resistance of a graph captures its commute and cover times,
\emph{Computational Complexity}
\textbf{6} (1996), no.~4, 312--340.

\bibitem{DingLeePeres}
J.~Ding, J.~R.~Lee, and Y.~Peres,
Cover times, blanket times, and majorizing measures,
\emph{Annals of Mathematics}
\textbf{175} (2012), no.~3, 1409--1471.

\bibitem{DirksenGenericChaining}
S.~Dirksen,
Tail bounds via generic chaining,
\emph{Electronic Journal of Probability}
\textbf{20} (2015), no.~53, 1--29.

\bibitem{KahnLinialNisanSaks}
J.~D.~Kahn, N.~Linial, N.~Nisan, and M.~E.~Saks,
On the cover time of random walks on graphs,
\emph{Journal of Theoretical Probability}
\textbf{2} (1989), no.~1, 121--128.

\bibitem{KleinRandic}
D.~J.~Klein and M.~Randi\'c,
Resistance distance,
\emph{Journal of Mathematical Chemistry}
\textbf{12} (1993), 81--95.

\bibitem{LevinPeres}
D.~A.~Levin and Y.~Peres,
\emph{Markov Chains and Mixing Times},
2nd ed., with contributions by E.~L.~Wilmer,
American Mathematical Society, Providence, RI (2017).

\bibitem{LyonsPeres}
R.~Lyons and Y.~Peres,
\emph{Probability on Trees and Networks},
Cambridge Series in Statistical and Probabilistic Mathematics, vol.~42,
Cambridge University Press (2016).

\bibitem{PaleyZygmund}
R.~E.~A.~C.~Paley and A.~Zygmund,
A note on analytic functions in the unit circle,
\emph{Mathematical Proceedings of the Cambridge Philosophical Society}
\textbf{28} (1932), no.~3, 266--272.

\bibitem{PeresTalwarWieder}
Y.~Peres, K.~Talwar, and U.~Wieder,
Graphical balanced allocations and the \((1+\beta)\)-choice process,
\emph{Random Structures \& Algorithms}
\textbf{47} (2015), no.~4, 760--775.

\bibitem{SauerwaldSun}
T.~Sauerwald and H.~Sun,
Tight bounds for randomized load balancing on arbitrary network topologies,
in \emph{Proceedings of the 53rd Annual IEEE Symposium on Foundations of Computer Science},
IEEE, (2012), pp.~341--350.

\bibitem{TalagrandOrig}
M.~Talagrand,
\emph{Regularity of Gaussian processes},
Acta Math. \textbf{159} (1987), no.~1--2, 99--149.

\bibitem{TalagrandGamma}
M.~Talagrand,
\emph{Majorizing measures: the generic chaining},
Ann. Probab. \textbf{24} (1996), no.~3, 1049--1103.

\end{thebibliography}
\end{document}